\documentclass[reqno]{amsart}
\usepackage{amssymb, amsmath}
\usepackage{natbib}
\usepackage{lscape}
\usepackage{dsfont}
\usepackage{mathrsfs}
\usepackage{bbm}

\usepackage[pdftex,plainpages=false,colorlinks,hyperindex,bookmarksopen,linkcolor=red,citecolor=blue,urlcolor=blue]{hyperref}

\DeclareMathAlphabet{\mathpzc}{OT1}{pzc}{m}{it}

\bibpunct{[}{]}{;}{n}{,}{,}

\newtheorem{te}{Theorem}[section]
\newtheorem{defin}[te]{Definition}
\newtheorem{os}[te]{Remark}
\newtheorem{prop}[te]{Proposition}
\newtheorem{lem}[te]{Lemma}

\numberwithin{equation}{section}

\allowdisplaybreaks

\newcommand {\puntomio} {\mathbin{\vcenter{\hbox{\scalebox{.45}{$\bullet$}}}}}

\def \l { \left( }
\def \r {\right) }
\def \ll { \left\lbrace }
\def \rr { \right\rbrace }

\begin{document}

	\title[]{L\'evy mixing related to distributed order calculus, subordinators and slow diffusions}
	\author{Bruno Toaldo} 
	\address{Department of Statistical Sciences, Sapienza - University of Rome}
	\email {bruno.toaldo@uniroma1.it}
	\keywords{Subordinators, L\'evy mixing, distributed order calculus, inverse local time, Bernstein functions, slow diffusions}
	\date{\today}
	\subjclass[2010]{60G51, 60J55, 45K05}

		\begin{abstract}
The study of distributed order calculus usually concerns about fractional derivatives of the form $\int_0^1 \partial^\alpha u \, m(d\alpha)$ for some measure $m$, eventually a probability measure. In this paper an approach based on L\'evy mixing is proposed. Non-decreasing L\'evy processes associated to L\'evy triplets of the form $\l a(y), b(y) , \nu(ds, y) \r$ are considered and the parameter $y$ is randomized by means of a probability measure. The related subordinators are studied from different point of views. Some distributional properties are obtained and the interplay with inverse local times of Markov processes is explored. Distributed order integro-differential operators are introduced and adopted in order to write explicitly the governing equations of such processes. An application to  slow diffusions (delayed Brownian motion) is discussed.
\end{abstract}
	
	\maketitle

\tableofcontents

\section{Introduction}
Distributed order calculus usually concerns about the distributed order fractional derivative i.e.
\begin{align}
\frac{^pd^\beta}{d x^\beta} u(x) \, = \,  \int_0^1 \frac{d^\beta}{dx^\beta} u(x) p(d\beta),
\label{dofd}
\end{align}
where $\frac{d^\beta}{dx^\beta}$ can be a fractional derivative in the Riemann-Liouville sense as well as a Dzerbayshan-Caputo derivative and $p(d\beta)$ is a measure (eventually a probability measure). For the definitions of fractional derivatives see, for example, \citet{kill}.
 
In this paper we develop an approach to distributed-order calculus based on L\'evy mixing (see \citet{bn} for L\'evy mixing). We consider functions of the form
\begin{align}
f(\lambda, y) \, = \, a(y) + b(y) \lambda + \int_0^\infty \l 1-e^{-\lambda s} \r \nu(ds, y)
\label{prese}
\end{align}
such that $\lambda \to f(\lambda, y)$ is a Bernstein function for all $y$ in some Polish space $E$. Under suitable integrability conditions on the family $\ll \nu(ds, y) \rr_{y \in E}$ of L\'evy measures parametrized by $y$ and on $a(y)$ and $b(y)$ we then consider the function
\begin{equation}
\int_W f(\lambda, y) p(dy) \, = \, \int_W \l a(y) + b(y)\lambda \r p(dy) + \int_0^\infty \l 1-e^{-\lambda s} \r \int_W \nu(ds, y) p(dy)
\label{mixfirsttime}
\end{equation}
for a probability measure $p$ on $W \subseteq E$.
Since $p$ is a probability measure we will write for the sake of simplicity
\begin{align}
\int_W f(\lambda, y) p(dy) \, = \,\mathds{E} f(\lambda, Y)
\end{align}
for a r.v. $Y$ with law $p$ on $W\subseteq E$.
Such a procedure is a particular case of the so-called L\'evy mixing which has been recently studied in a systematic way by \citet{bn} in case the measure $p$ is not necessarily a probability measure. Among other things the authors pointed out that L\'evy mixing arises naturally from stochastic integral representation of processes.

One of the goals of the present paper is to study subordinators with Laplace exponent as in \eqref{mixfirsttime} and we call such processes \emph{distributed order} subordinators. We study the transition probabilities of such subordinators by relating them with the most important classes of convolution semigroups. We explore the connection with the inverse local times of Markov processes and we study the right-continuous inverses i.e. the hitting-times. Distributed order calculus comes into play for writing the governing equations of distributed order subordinators and of their hitting-times. From this point of view we have inserted the theory of distributed order calculus (based on operators of the form \eqref{dofd}) in a more general unifying framework.

We introduce the distributed order integro-differential operator
\begin{align}
_c\mathcal{D}^{f, p}_t u(t) \, = \, \mathds{E}  b(Y) \frac{d}{dt}u(t) + \frac{d}{dt} \int_c^t u(s) \mathds{E} \bar{\nu}(t-s, Y) ds, \qquad c >0,
\label{qui}
\end{align}
where $\bar{\nu}(s, y) =   a(y)+ \nu((s, \infty), y)$ and $Y$ is a r.v. with law $p$ on $W\subseteq E$. Denote by $\mathcal{L} \left[ u (\puntomio) \right] (\lambda) = \widetilde{u}(\lambda)$ the Laplace transform of the function $u$, we note that the operator \eqref{qui} has Laplace symbol
\begin{align}
\mathcal{L} \left[ \, _0\mathcal{D}_t^{f, p} u(t) \right] (\lambda) \, = \, \mathds{E} f(\lambda, Y) \widetilde{u}(\lambda) - \mathds{E} b(Y) u(0)
\end{align}
where $\lambda \to \mathds{E} f(\lambda, Y)$ is a Bernstein function as in \eqref{mixfirsttime}, and permits us to write explicitly the governing equations of the above distributed order subordinators with Laplace exponent \eqref{mixfirsttime}.
Then we introduce the distributed order operator
\begin{align}
_c\mathsf{D}^{f, p}_tu(t) \, = \, \mathds{E}  b(Y) \frac{d}{dt}u(t) + \frac{d}{dt} \int_0^t u(s) \mathds{E} \bar{\nu}(t-s, Y) ds - \mathds{E}  \bar{\nu}(t-c, Y) u(c)
\end{align}
and we discuss an application to the diffusion equation
\begin{align}
_0\mathsf{D}^{f, p}_t q(x, t)\, = \, \Delta q(x, t), \qquad x \in \mathbb{R}^d, t>0.
\label{diffintro}
\end{align}
Under suitable assumptions we show that the mean square displacement
\begin{align}
\mathscr{M}(t) \, = \, \int_{\mathbb{R}^d} \, |x-u|^2 \, q(x-u, t) du,
\label{msdintro}
\end{align}
where $q$ is the fundamental solution to \eqref{diffintro}, behaves as 
\begin{align}
1\bigg/\int_W f(1/t, y)p(dy) \textrm{ for } t \to \infty.
\label{simintro}
\end{align}
Usually a diffusion is said to be slow (or subdiffusion) if $\mathscr{M}(t) \sim Ct^\alpha$ with $\alpha < 1$, $C>0$. Here we study
\begin{align}
\lim_{t \to \infty} \frac{t}{\mathscr{M}(t)}
\label{rapptintro}
\end{align}
by using \eqref{simintro} and we show that \eqref{rapptintro} can not be zero but may be either finite or infinite. Actually in the most common cases \eqref{rapptintro} is infinite proving that diffusions related to \eqref{diffintro} are chiefly subdiffusive. We examine a particular case (related to distributed order fractional calculus) in which the mean square displacement is $\sim C \log t$. This last situation was discussed in \citet{chechprimo, chechsecondo, chechterzo, kochudo} and is related to the so-called ultraslow diffusions.
\subsection{Some background information on distributed and fractional order calculus}
Operators of the form \eqref{dofd} have been introduced for the first time in \citet{caputo}. Successively a rigorous mathematical theory has been developed in \citet{kochudo} and applied to diffusion equations of the form
\begin{align}
\frac{^p\partial^\beta}{\partial t^\beta} u(x, t) \, = \, \Delta u(x, t) + f(x, t),
\end{align}
which describe some particular anomalous diffusions (for anomalous diffusions the reader can consult \citet{metkla1, metkla2}). In particular \citet{kochudo} deals with the distributed order fractional derivative
\begin{align}
\frac{d}{dt} \int_0^t u(s) \int_0^1 \frac{(t-s)^{-\beta}}{\Gamma(1-\beta)} p(d\beta) ds - \int_0^1 \frac{t^{-\beta}}{\Gamma(1-\beta)} u(0) p(d\beta)
\label{introdokochu}
\end{align}
which is defined for a continuous function $u$. For different forms of distributed order fractional operators the reader can consult \citet{lorenzo} and the references therein. The study of boundary value problems for time-fractional distributed order diffusion equations can be found in \citet{luccio}.
In physics the literature is vast. For example in \citet{chechprimo, chechsecondo} different types of diffusions related to distributed order operators have been studied and a probabilistic description of slow diffusions can be found in \citet{meerstmod}. In \citet{mainpagn} the relationship between distributed order diffusions and the Fox function have been explored. An application to the fractional relaxation can be found in \citet{mainal} and the asymptotic behaviour of the solution to a distributed order relaxation equation has been studied in \citet{kochuphys}. 

Much attention on the probabilistic point of view has been adopted recently in \citet{luisado, meerdo}. In particular in \cite{luisado} an approach based on time-changed processes is investigated. The authors take inspiration from the known results concerning the interplay between fractional equations and time-changed processes and extend the results to the distributed order case. Such an interplay has been explored firstly in work such as \citet{allouba, baem, orsptrf, orsann, saiche, zasla}. A classical result (\cite{baem}) states, roughly speaking, that a L\'evy motion time-changed with the hitting-time process of a $\beta$-stable subordinator is the stochastic solution to a time-fractional Cauchy problem. A $\beta$-stable subordinator is a non-decreasing L\'evy process with Laplace exponent $\lambda^\beta$, $\beta \in (0,1)$.
Such results have been deeply investigated (see, for example, \citet{meerann} for fractional Cauchy problems in bounded domains) and has been successively extended as follows. In \citet{meertri} the authors considered continuous time random walks (CTRW) time-changed with a renewal process. They pointed out that the limit of such a CTRW is a L\'evy process $\mathpzc{L}$ time-changed with the hitting-time of a subordinator $\sigma^f(t)$, $t>0$, and that its density is the fundamental solution to the abstract Cauchy problem
\begin{align}
f \l \partial_t \r q(x, t) \, = \, A q(x, t) + \delta(x) \nu(t, \infty),
\label{acp}
\end{align}
in the mild sense. In \eqref{acp} $A$ is the generator of $\mathpzc{L}$, $f$ is the Laplace exponent of $\sigma^f$ with L\'evy measure $\nu$ and \begin{align}
f(\partial_t)q(t) \, = \, \mathcal{L}^{-1} \left[ f(\lambda) \mathcal{L}\left[ q\right](\lambda) - \lambda^{-1}f(\lambda) q(0) \right] (t).
\label{laplmeer}
\end{align}
If the subordinator considered is the $\beta$-stable subordinator then $f (\partial_t)$ reduces to the Dzerbayshan-Caputo fractional derivative. 

In \citet{kochu} the author sheds lights on the explicit form of the operator \eqref{laplmeer}. The author considered  derivatives of the form
\begin{align}
\mathbb{D}_{(k)}u(t) \, = \, \frac{d}{dt} \int_0^t k(t-\tau) u(\tau) d\tau -k(t) u(0)
\label{opkochuintro}
\end{align}
under suitable assumptions on the Laplace transform $\mathcal{L} \left[ k (\puntomio) \right]$ and he relates such operators with the relaxation and heat equations
\begin{align}
&\mathbb{D}_{(k)}u \, = \, -\lambda u, \label{relax} \\
& \mathbb{D}_{(k)}u \, = \, \Delta u \notag.
\end{align}
He pointed out that the solution to \eqref{relax} appears in the description given in \citet{meerpoisson} of the process $N(L^f(t))$, $t>0$, where $N$ is a Poisson process with rate $\lambda$ and $L^f$ is an inverse subordinator.
In \citet{toaldoconv} an approach based on operators of the form
\begin{align}
\mathcal{D}_t^f u \, = \, b\frac{d}{dt}u(t) + \frac{d}{dt} \int_0^t u(s) \bar{\nu} (t-s)ds,
\label{toar} \\
\mathfrak{D}_t^f u \, = \, b\frac{d}{dt}u(t) +  \int_0^t  u^\prime(s) \bar{\nu} (t-s)ds
\label{toac}
\end{align}
has been adopted. In \eqref{toar} and \eqref{toac}, $\bar{\nu}(s) = a + \nu (s, \infty)$ for a L\'evy triplet $(a, b,\nu)$ such that $f$ is the Bernstein function
\begin{align}
f(\lambda) \, = \, a+b\lambda + \int_0^\infty \l 1-e^{-\lambda s} \r \nu(ds).
\end{align}
Note that the Laplace symbol of the operator \eqref{toac} coincides with \eqref{laplmeer} and thus \eqref{toac} is a way to write $f\l \partial_t \r$.

\section{Distributed order subordinators}
All throughout the paper we will deal with Bernstein functions (\citet{artbern}). A Bernstein function is a $C^\infty$ non-negative function $f:(0, \infty) \to \mathbb{R}$ for which
\begin{align}
(-1)^kf^{(k)}(\lambda) \leq 0, \textrm{ for all } \lambda > 0 \textrm{ and } k \in \mathbb{N},
\end{align}
and has the represention
\begin{align}
f(\lambda) \, = \, a+b\lambda+\int_0^\infty \l 1-e^{-\lambda s} \r \nu(ds), \qquad a \geq 0, b \geq 0.
\end{align}
The measure $\nu$ is a non-negative $\sigma$-finite measure supported on $(0, \infty)$ such that the integrability condition
\begin{align}
\int_0^\infty \l s \wedge 1 \r \nu(ds) < \infty
\end{align}
is fulfilled (see more on Bernstein functions in \citet{librobern}). In what follows we will denote by $\mathrm{BF}$ the family of all Bernstein functions. In this work we will deal with functions of the form $f: (0, \infty) \times  E \to \mathbb{R}$ such that for all fixed $y \in E$, $\lambda \to f(\lambda, y)$ is a Bernstein function, and $E$ is a Polish space. Assume therefore that
\begin{align}
f(\lambda, y) \, = \, a(y) + b(y)\lambda + \int_0^\infty \l 1-e^{-s\lambda} \r \nu(ds,y), \qquad y \in E,
\label{bern2var}
\end{align}
where $a(y)$ and $b(y)$ are non-negative and $\nu(ds, y)$ is a non-negative $\sigma$-finite measure on $(0, \infty)$ such that
\begin{align}
\int_0^\infty \l s \wedge 1 \r \nu(ds, y) \, = \, V(y) < \infty , \, \forall y \in E.
\label{assump}
\end{align}
In what follows we want that for a probability measure $p$ on $W \subseteq E$ the function
\begin{align}
\lambda \to \int_W f(\lambda, y) p(dy)  =  \int_W \l a(y) + b(y) \lambda \r p(dy) + \int_0^\infty \l 1-e^{-\lambda s} \r \int_W \nu(ds, y) p(dy)
\end{align}
is a Bernstein function and therefore we will work all throughout the paper under the following assumptions
\begin{enumerate}
\item[\textbf{A1)}] \label{a1} Assume that the Borel-measurable functions $y \to a(y)$ and $y \to b(y)$ are bounded or that the probability measure $p$ is such that
\begin{align}
\int_W a(y) p(dy) < \infty \textrm{ and } \int_W b(y) p(dy) < \infty.
\end{align}
\item[\textbf{A2)}] \label{a2} Assume that the family $\ll \nu(ds, y) \rr_{y \in E}$ is such that for all $y \in E$ the function $V(y)$ defined in \eqref{assump} is bounded or that the probability measure $p$ is such that
\begin{align}
\int_W V(y) p(dy) < \infty.
\end{align}
\end{enumerate}
Note that under A2)
\begin{align}
\int_0^\infty (s\wedge 1) \int_W \nu(ds, y) p(dy) = \int_W V(y) p(dy) < \infty.
\end{align}

We will also need the function
\begin{align}
\frac{f(\lambda, y)}{\lambda} \, = \, b(y) + \int_0^\infty e^{-\lambda s} \, \bar{\nu}(s,y) \, ds, \qquad y \in E,
\label{berncode}
\end{align}
where
\begin{align}
\bar{\nu}(s,y) \, = \, a(y)+\nu((s, \infty), y), \qquad s>0, y \in E.
\end{align}
The representation \eqref{berncode} is obtained from \eqref{bern2var} by performing an integration by parts. Note that for all fixed $y \in E$ the function $\lambda \to \lambda^{-1}f(\lambda, y)$ is completely monotone i.e. it is $C^\infty$ and such that
\begin{align}
(-1)^k \frac{\partial^k}{\partial \lambda^k} \frac{f(\lambda, y)}{\lambda} \geq 0, \textrm{ for all } \lambda > 0 \textrm{ and } k \in \mathbb{N} \cup \ll 0 \rr.
\end{align}
Note also that under A1) and A2) it is true that for all $z>0$
\begin{align}
\int_z^\infty \int_W \nu(ds, y) p(dy) < \infty
\end{align}
since
\begin{align}
\int_W \nu(ds, y) p(dy)
\label{levmesmia}
\end{align}
is a L\'evy measure.

\subsection{Definition of the process}
In this section we introduce the concept of distributed order subordinator. This can be done by means of the L\'evy-It\^o decomposition. A classical subordinator $\sigma^f$ is a non-decreasing L\'evy process and it should be defined as (\citet{itodec})
\begin{align}
\sigma^f (t) \, = \, bt + \sum_{0 \leq s \leq t} \mathpzc{e}(s), \qquad b \geq 0,
\end{align}
where $\mathpzc{e}(s)$ is a Poisson point process with characteristic measure $a\delta_\infty+\nu(ds)$ (the reader can consult \citet{bertoinb, bertoins} for some details). Such a procedure is still valid if we consider a mixture approach. Let $\iota_y$ be a non-negative measure on $(0, \infty)$. Suppose $\iota_y (0, \infty)= \infty$, for all $y \in E$, but assume that there exists a partition of $(0, \infty)$, say $A_n$, into Borel sets such that the application $y \to c(y, n) = \iota_y(A_n)$ is bounded uniformly in $y$ or assume that the probability measure $p$ is such that
\begin{align}
\int_W \iota_y (A_n) p(dy) < \infty.
\end{align}
Consider now the product space $(0, \infty) \times [0, \infty)$, the measure $\int_W \iota_y (\puntomio) p(dy) \otimes \mathpzc{l}(dt)$ where $\mathpzc{l}(\puntomio)$ is the Lebesgue measure, and a Poisson measure $\psi$ on $(0, \infty) \times [0, \infty)$ such that for $B$ Borel
\begin{align}
\Pr \ll \psi \l B \times [s, t] \r =k\rr \, = \, \frac{\l (t-s)\int_W \iota_y(B)p(dy)\r^k}{k!} e^{-(t-s) \int_W \iota_y(B)p(dy)}.
\end{align}
Since $\psi \l B \times \ll t \rr \r = 0 \textrm{ or } 1$, we can write
\begin{align}
\psi (\puntomio)  = \sum_{t \geq 0} \delta_{[t,\mathpzc{e}(t)]} (\puntomio)
\end{align}
for $\mathpzc{e}(t)$ a Poisson point process with mixed characteristic measure $\int_W \iota_y (\puntomio) p(dy)$ and where $\delta_{[t,\mathpzc{e}(t)]}(\puntomio)$ is the Dirac point mass at the point $(t,\mathpzc{e}(t))$.
\begin{defin}
\label{defdos}
We define the distributed order subordinator according to the L\'evy-It\^o decomposition as
\begin{align}
\sigma^{f, p} (t) \, = \,  t\int_W b(y)p(dy) + \sum_{0 \leq s \leq t} \mathpzc{e}(s), 
\end{align}
where $\mathpzc{e}(s)$ is a Poisson point process, for $a(y)$, $b(y)$, $\nu$ and $p$ as in A1) and A2), with characteristic measure $\int_W \l \nu(dx, y) +a(y)\delta_\infty \r p(dy)$.
\end{defin}
Under A1) and A2) we may use Campbell theorem (see, for example, \citet{king}, page 28) for writing
\begin{align}
\mathds{E} e^{-\lambda \sigma^{f, p} (t) } \, = \, & \mathds{E} \exp \ll -\lambda  t\int_W b(y)p(dy) - \lambda \sum_{0\leq s \leq t} \mathpzc{e} (s) \rr \notag \\
= \, & \exp \ll - \lambda t \int_W b(y)p(dy) + \int_0^\infty \l e^{-\lambda s} -1 \r \mathds{E}  \psi \l ds \times [0, t] \r \rr \notag \\
= \, & \exp \ll -t \int_W f(\lambda, y) p(dy) \rr.
\end{align}

\begin{os}
As pointed out in \citet{bn} L\'evy mixing may be also explained by means of L\'evy bases. We recall here some basic facts. Let $Z \subset \mathbb{R}^k$ be a Borel set and let $\mathcal{B}_b(Z)$ be the bounded Borel sets of $Z$. A family $\ll \Lambda  (A): A \in \mathcal{B}_b(Z) \rr$ of r.v.'s is said to be a L\'evy basis on $Z$ if the following are true:
\begin{enumerate}
\item $\Lambda (A)$ is infinitely divisible for all $A \in \mathcal{B}_b(Z)$
\item If $A_1, \cdots, A_n$ are disjoint sets in $\mathcal{B}_b(Z)$ then the r.v.'s $\Lambda (A_1) \cdots \Lambda (A_n)$ are independent
\item If $A_1$, $A_2$, $\cdots$, are disjoint sets in $\mathcal{B}_b(Z)$ such that $\cup_{i=1}^\infty A_i \in \mathcal{B}_b(Z)$, then it is true that
\begin{align}
\Lambda \l \bigcup_{i=1}^\infty A_i \r \, = \, \sum_{i=1}^\infty  \Lambda (A_i),
\end{align}
where the right hand side converges a.s..
\end{enumerate}
Under the first assumption one can easily compute the cumulant transform of $\Lambda$ i.e. the transformation
\begin{align}
C \l \xi \ddagger \Lambda (A)   \r \, := \, &  \log \mathds{E}e^{i\xi \Lambda(A)} \notag \\
= \, & i \xi b(A) - \frac{1}{2} \xi^2 g(A) + \int_{\mathbb{R}} \l e^{i\xi s} -1 -i\xi s \mathds{1}_{[-1,1]}(s) \r U(ds, A),
\label{216}
\end{align}
where the measure $U(ds, A)$ is a L\'evy measure (on $\mathbb{R}$) for all fixed $A$ and a measure on $\mathcal{B}_b(Z)$ for all fixed $ds$. Such measure $U$ is said to be a generalized L\'evy measure and without loss of generality (see \citet{raja}) one can assume that there exists a measure $p$ such that $U(ds, dz) = \nu(ds, z) p(dz)$. In order to find the L\'evy processes examined in the present paper it is sufficient to extend the measure $U$ to the space $\l \mathbb{R} \times Z \times (0, \infty) \r$ as the product measure $U(ds, dz) \otimes \mathpzc{l}(dt)$ where $\mathpzc{l}(\puntomio)$ represents the Lebesgue measure on $(0, \infty)$. Then assume that $\Lambda$ is such that \eqref{216} becomes a Bernstein function (therefore $U(\puntomio, dz)$ must be supported on $(0, \infty)$) and that $p(Z)=1$ in such a way that 
\begin{align}
C \l \xi \ddagger \Lambda(dz \times [0,t]) \r \, = \,  \int_0^\infty \l e^{i\xi s}-1 \r \l \mathpzc{l}[0,t] \nu(ds, z) p(dz) \r
\end{align}
and by integrating over $Z$ the L\'evy measure becomes
\begin{align}
 t \int_Z \nu(ds, z) p(dz) 
\end{align}
which is a mixture of the form \eqref{levmesmia}.
\end{os}

\subsection{Distributional properties of the process}
The process $\sigma^{f, p}(t)$, $t \geq 0$, is a subordinator generated by the L\'evy triplet 
\begin{align}
\l \int_W a(y)p(dy), \int_W b(y)p(dy), \int_W\nu(ds, y)p(dy) \r, \qquad W \subseteq E,
\label{triplet}
\end{align}
and therefore has Laplace exponent
\begin{align}
\mathds{E} e^{-\lambda \sigma^{f, p}(t)} \, = \, \exp \ll -t \l \mathds{E} a(Y)+\mathds{E} b(Y)\lambda + \int_0^\infty \l 1-e^{-\lambda s} \r \mathds{E} \nu(ds, Y) \r \rr,
\end{align}
where $Y$ is a r.v. with law $p$ on $W \subseteq E$.
Here we study the distributional properties of distributed order subordinators, denoted by $\sigma^{f, p}(t)$, $t \geq 0$, with L\'evy triplet \eqref{triplet}, by making assumptions on the subordinator $\sigma^{f, y}$ corresponding to the L\'evy triplet $\l a(y), b(y), \nu(ds, y)\r$.

The transition probabilities of subordinators are convolution semigroups supported on $[0, \infty)$. A family $\mu_t$, $t\geq 0$, of sub-probability measures on $[0, \infty)$ is said to be a convolution semigroup if
\begin{enumerate}
\item $\mu_t[0, \infty) \leq 1$, for all $t \geq 0$,
\item $\mu_t * \mu_s = \mu_{t+s}$, for all $s, t \geq 0$,
\item $\mu_t \to \delta_0$, vaguely as $t \to 0$.
\end{enumerate}
The explicit form of the transition probabilities of subordinators is known just in some particular cases. However since such distributions are characterized by the Laplace transform
\begin{align}
\mathds{E} e^{-t \sigma^f (t)} \, = \, e^{-tf(\lambda)}
\end{align}
something could be said by observing at the Laplace exponent $f$. There exist indeed some special classes of subordinators with nice properties and such classes can be very often distinguished by observing the Laplace exponent $f$. This requires some theory of Bernstein functions. For a background on such a theory and on the relationships between family of Bernstein functions and convolution semigroups of sub-probability measures the reader can consult \citet{librobern} and the references therein.

In what follows we will always refer to $\sigma^{f, p}(t)$, $t>0$, as the distributed order subordinator obtained by randomizing the L\'evy triplet $(a(y), b(y), \nu(ds, y))$ associated to the subordinator $\sigma^{f, y}$.

\subsubsection{Bondesson class}
One of the most important class of sub-probability measures related to subordinators is the Bondesson class. This class of measures is closed under convolution and vague convergence (see Lemma 9.2. of \cite{librobern}) and is composed of infinitely divisible measures. A sub-probability measure $\mu$ is said to be of the Bondesson class, and we write $\mu \in \mathrm{BO}$, if
\begin{align}
\mathcal{L} \left[ \mu \right] (\lambda) \, = \, e^{-f(\lambda)}
\end{align}
where $f$ is a \emph{complete} Bernstein function. A Bernstein function $f$ is said to be complete, and we write $f \in \mathrm{CBF}$, if it has the representation
\begin{align}
f(\lambda) \, = \, a + b \lambda + \int_0^\infty \l 1-e^{-\lambda s} \r \mathfrak{m}(s)ds,
\label{35}
\end{align}
where the density $s \to \mathfrak{m}(s)$ is a completely monotone function.
\begin{prop}
Let $\mu_t^y(B) = \Pr \ll \sigma^{f,y}(t) \in B \rr$, $t\geq 0$. Assume that $\mu_t^y \in \mathrm{BO}$, for all $y \in E$.
Let $\mu_t^p (B) = \Pr \ll \sigma^{f, p} (t) \in B \rr$, where $\sigma^{f, p}$ is the corresponding distributed order subordinator. We have that $\mu_t^p \in \mathrm{BO}.$
\end{prop}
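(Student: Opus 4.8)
The plan is to reduce the statement to a single analytic fact: a $p$-mixture of completely monotone functions is again completely monotone. By the definition of the Bondesson class recalled above, the hypothesis $\mu_t^y \in \mathrm{BO}$ for every $y$ means precisely that, for each fixed $y \in E$, the Bernstein function $\lambda \mapsto f(\lambda, y)$ is \emph{complete}; hence its L\'evy measure is absolutely continuous, $\nu(ds, y) = \mathfrak{m}(s, y)\, ds$, with $s \mapsto \mathfrak{m}(s, y)$ completely monotone for each $y$. Since $\mu_t^p$ is the law of the subordinator with triplet \eqref{triplet}, its Laplace exponent is $\mathds{E} f(\lambda, Y)$ and its L\'evy measure is $\int_W \nu(ds, y) p(dy)$. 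The whole point is therefore to show that this mixed L\'evy measure again has a completely monotone density, so that $\mathds{E} f(\lambda, Y) \in \mathrm{CBF}$ and consequently $\mu_t^p \in \mathrm{BO}$.

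First I would write the mixed L\'evy measure as $\int_W \mathfrak{m}(s, y) p(dy)\, ds$ and set $\mathfrak{M}(s) := \int_W \mathfrak{m}(s, y) p(dy)$; this is legitimate because, by A2), the mixture $\int_W \nu(ds,y) p(dy)$ is a genuine L\'evy measure (as already observed after \eqref{levmesmia}), so $\mathfrak{M}$ is finite for every $s>0$ and locally integrable away from the origin. Then I would prove complete monotonicity of $\mathfrak{M}$. The cleanest route is Bernstein's theorem: for each $y$ write $\mathfrak{m}(s, y) = \int_0^\infty e^{-s\tau}\, \gamma_y(d\tau)$ for a positive measure $\gamma_y$ on $(0,\infty)$, so that by Tonelli
\begin{align}
\mathfrak{M}(s) \, = \, \int_W \int_0^\infty e^{-s\tau}\, \gamma_y(d\tau)\, p(dy) \, = \, \int_0^\infty e^{-s\tau}\, \Gamma(d\tau), \qquad \Gamma(d\tau) := \int_W \gamma_y(d\tau)\, p(dy).
\end{align}
Since $\Gamma$ is again a positive measure on $(0,\infty)$, $\mathfrak{M}$ is the Laplace transform of a positive measure and hence completely monotone, again by Bernstein's theorem. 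This identifies $\mathds{E} f(\lambda, Y)$ as a complete Bernstein function with constant term $\int_W a(y) p(dy)$ and drift $\int_W b(y) p(dy)$ (both finite by A1)) and L\'evy density $\mathfrak{M}$, which is exactly the representation \eqref{35}; therefore $\mathcal{L}\left[\mu_t^p\right](\lambda) = e^{-t\, \mathds{E} f(\lambda, Y)}$ with $t\,\mathds{E} f(\puntomio, Y) \in \mathrm{CBF}$, and $\mu_t^p \in \mathrm{BO}$.

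The step I expect to require the most care is the passage to the Laplace/Tonelli representation of $\mathfrak{M}$, because it presupposes enough measurability of the map $y \mapsto \gamma_y$ (equivalently joint measurability of $(s,y) \mapsto \mathfrak{m}(s, y)$) for $\Gamma$ to be well defined and for Tonelli's theorem to apply. A convenient way to secure this is to bypass the representing measures and argue directly with the derivatives: for each $s>0$ and each $k$ the functions $y \mapsto (-1)^k \partial_s^k \mathfrak{m}(s, y) \geq 0$ are measurable and nonnegative, so by Tonelli one differentiates under the integral sign to get $(-1)^k \mathfrak{M}^{(k)}(s) = \int_W (-1)^k \partial_s^k \mathfrak{m}(s, y)\, p(dy) \geq 0$ for all $k$, which is complete monotonicity; the local integrability furnished by A2) guarantees the interchange and the finiteness of all the integrals involved. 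Either formulation closes the proof.
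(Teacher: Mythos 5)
Your proof is correct and follows essentially the same route as the paper: reduce $\mathrm{BO}$-membership to showing $\mathds{E} f(\lambda, Y) \in \mathrm{CBF}$, then use the fact that each completely monotone density $\mathfrak{m}(s,y)$ is the Laplace transform of a measure $m_y(d\tau)$, so that by Tonelli the mixed density $\int_W \mathfrak{m}(s,y)\,p(dy)$ is again the Laplace transform of the mixed representing measure, hence completely monotone. Your added remarks on measurability and the alternative argument via differentiation under the integral sign are refinements of detail, not a different method.
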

\begin{proof}
The fact that $\mu_t^y \in \mathrm{BO}$ is equivalent to saying that the Laplace exponent $\lambda \to f(\lambda, y)$ is a complete Bernstein function for all fixed $y$ and therefore  from \eqref{35}
\begin{align}
f(\lambda, y) \, = \, & a(y)+b(y)\lambda + \int_0^\infty \l 1-e^{-\lambda s} \r \nu(ds, y) \notag \\
= \, & a(y) + b(y) \lambda + \int_0^\infty \l 1-e^{-\lambda s} \r \mathfrak{m}_y(s) ds  , \qquad \textrm{for all } y \in E.
\end{align}
The function
\begin{align}
\mathds{E} f(\lambda, Y) \, = \, \mathds{E} a(Y) + \mathds{E} b(Y) \lambda + \int_0^\infty \l 1-e^{-\lambda s} \r \mathds{E} \mathfrak{m}_Y(s)ds
\end{align}
is again a complete Bernstein function. This can be ascertained by using the fact that since $\mathfrak{m}$ is completely monotone then it is the Laplace transform of a certain measure $m_y(dt)$ and thus
\begin{align}
\mathds{E} \mathfrak{m}_Y(s) \, = \,  \int_0^\infty e^{-st} \int_W  m_y(dt) p(dy)
\end{align}
is a completely monotone function representing the density of the L\'evy measure $\mathds{E} \nu(ds, Y)$.
\end{proof}
\subsubsection{Mixture of exponential distributions}
A measure $\mu$ on $[0, \infty)$ is said to be a mixture of exponential distributions, and we write $\mu \in \mathrm{ME}$, if
\begin{align}
\mu [0,t] \, = \, \int_{(0, \infty]} \l 1-e^{-t \alpha} \r \rho(d\alpha)
\label{condme}
\end{align}
for some sub-probability measure $\rho$ on $(0, \infty]$.
We recall that $\mathrm{ME} \subset \mathrm{BO}$ (see \cite{librobern} page 81 for further details) since it consists of the family of measures such that $\mathcal{L}[\mu] (\lambda) = e^{-f} = \frac{1}{g}$ where $1/g$ is a Stieltjies function (we write $1/g \in \mathrm{S}$), with $1/g(0+) \leq 1$ for $f \in \mathrm{CBF}$. A Stieltjes function is a function $h:(0, \infty) \to [0, \infty)$ with representation
\begin{align}
h(\lambda) \, = \, \frac{a}{\lambda} + b + \int_0^\infty \frac{1}{\lambda +t} w(dt), \qquad a, b \geq 0,
\end{align}
where $w(dt)$ is a measure on $(0, \infty)$ such that
\begin{align}
\int_0^\infty \frac{1}{1+t} w(dt) < \infty.
\end{align} 
The reader can also consult \citet{steuten}, Chapter VI, for detailed information on mixture of exponential distributions.
Formally a measure $\mu$ is said to be a mixture of exponential distributions if it is such that $\mathcal{L}\left[ \mu \right] (\lambda) \in \mathrm{S}$ with $\mathcal{L}\left[ \mu \right] (\lambda) \big|_{\lambda = 0} \leq 1$.
\begin{prop}
Let $\mu_t^y(dx) = \Pr \ll \sigma^{f, y}(t) \in dx \rr$ be such that $\mu_t^y \in \mathrm{ME}$, for all fixed $y \in E$. We have that the corresponding distributed order subordinator $\sigma^{f, p}(t)$, $t \geq 0$, is such that $\Pr \ll \sigma^{f, p}(t) \in dx \rr=\mu_t^p(dx) \in \mathrm{ME}$.
\end{prop}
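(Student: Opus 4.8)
The plan is to argue entirely at the level of Laplace transforms, using the Pick (Nevanlinna) descriptions of $\mathrm{S}$ and $\mathrm{CBF}$. Recall from the discussion preceding the statement that $\mu_t^y\in\mathrm{ME}$ means $\mathcal L[\mu_t^y](\lambda)=e^{-tf(\lambda,y)}\in\mathrm{S}$ with value at $0$ at most $1$; by the reciprocity $h\in\mathrm S\Leftrightarrow 1/h\in\mathrm{CBF}$ this is the same as $g_y:=e^{tf(\cdot,y)}\in\mathrm{CBF}$. Since $\mathcal L[\mu_t^p](\lambda)=e^{-t\,\mathds Ef(\lambda,Y)}=\exp\big(-t\int_W f(\lambda,y)\,p(dy)\big)$, the whole proposition reduces to proving that
\[
G(\lambda):=\exp\Big(t\int_W f(\lambda,y)\,p(dy)\Big)\in\mathrm{CBF};
\]
indeed then $\mathcal L[\mu_t^p]=1/G\in\mathrm S$ and its value at $0$ is $e^{-t\int_W a(y)p(dy)}\le 1$, so $\mu_t^p\in\mathrm{ME}$.

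First I would record the two facts that the hypothesis gives for each fixed $y$. Since $\mathrm{ME}\subset\mathrm{BO}$, the exponent $f(\cdot,y)$ is itself a complete Bernstein function, hence extends to a Pick function with $\mathrm{Im}\,f(z,y)\ge 0$ on the upper half-plane $\{\mathrm{Im}\,z>0\}$. Since moreover $g_y=e^{tf(\cdot,y)}\in\mathrm{CBF}$, we also have $\mathrm{Im}\,g_y(z)\ge0$, i.e. $\sin\big(t\,\mathrm{Im}\,f(z,y)\big)\ge 0$. Combining the two, and using that $t\,\mathrm{Im}\,f(z,y)$ is continuous on the (connected) upper half-plane and tends to $0$ as $z$ approaches the positive axis, I would conclude that the band constraint collapses to
\[
0\le t\,\mathrm{Im}\,f(z,y)\le \pi,\qquad \mathrm{Im}\,z>0,\ y\in W.
\]

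The heart of the argument is then a convexity observation. For $z$ in the upper half-plane,
\[
\mathrm{Im}\Big(t\int_W f(z,y)\,p(dy)\Big)=t\int_W \mathrm{Im}\,f(z,y)\,p(dy)
\]
is a $p$-average of quantities lying in $[0,\pi]$; since $p$ is a probability measure this average again lies in $[0,\pi]$. Consequently the holomorphic function $G=\exp\big(t\int_W f(\cdot,y)p(dy)\big)$ maps the upper half-plane into its closure and is positive on $(0,\infty)$, which is exactly the Pick characterization of $G\in\mathrm{CBF}$. This gives the reduction above and hence $\mu_t^p\in\mathrm{ME}$.

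The step I expect to demand the most care is the analytic groundwork supporting the two displays: that $z\mapsto\int_W f(z,y)\,p(dy)$ is well defined and holomorphic on $\mathbb C\setminus(-\infty,0]$, for which I would use the measurability of $y\mapsto f(z,y)$, the complete Bernstein (Stieltjes) integral representation of each $f(\cdot,y)$, and the integrability provided by A1) and A2) to obtain local-in-$z$ domination, then verify holomorphy by Morera's theorem and dominated convergence; the continuity argument pinning $t\,\mathrm{Im}\,f(z,y)$ to $[0,\pi]$ rather than to a higher band $[2k\pi,(2k+1)\pi]$ also belongs here. Conceptually, though, the decisive feature is simply that $p(W)=1$: it is this convexity that keeps the averaged argument within $[0,\pi]$ and $G$ within $\mathrm{CBF}$, in contrast with an ordinary product of two complete Bernstein (or Stieltjes) functions, of total mass two, which need not remain in the class.
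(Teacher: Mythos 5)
Your proof is correct, and it takes a genuinely different route from the paper's. The paper argues entirely on the real side: by Theorem 9.5 of \cite{librobern} (quoted in the paper's proof), $\mu_t^y \in \mathrm{ME}$ means the L\'evy measure $\nu(ds,y)$ has a completely monotone density $\int_0^\infty e^{-st}\eta_y(t)\,dt$ with $\eta_y$ taking values in $[0,1]$ and $\int_0^1 t^{-1}\eta_y(t)\,dt<\infty$; the mixture stays in the class because $\mathds{E}\eta_Y=\int_W \eta_y\,p(dy)$ is again $[0,1]$-valued, the integrability side condition being extracted from A2). You transfer the very same convexity to the complex side: ME membership becomes the band condition $t\,\mathrm{Im}f(z,y)\in[0,\pi]$ on the upper half-plane, a pointwise constraint that survives $p$-averaging. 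These are really two coordinates for one fact---by Stieltjes inversion, $\pi t\eta_y(s)$ is the boundary value of $\mathrm{Im}\,tf(\cdot,y)$ at $-s$---so both proofs rest on exactly the feature you isolate, $p(W)=1$. Comparative merits: the paper's route yields an explicit representing function $\mathds{E}\eta_Y$ for the mixed law and uses no complex analysis; your route avoids Theorem 9.5 altogether and is cleaner in the time parameter (strictly, $\mu_t^y\in\mathrm{ME}$ forces $t\eta_y\le 1$ rather than $\eta_y\le 1$, a normalization the paper's proof elides, its final display being in effect the case $t=1$, whereas your argument is exact for each fixed $t$ with the hypothesis at that same $t$). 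The one obligation you leave as a sketch, holomorphy of $z\mapsto\int_W f(z,y)\,p(dy)$ on $\mathbb{C}\setminus(-\infty,0]$, is genuine but goes through as you indicate: for compact $K\subset\mathbb{C}\setminus(-\infty,0]$ the Stieltjes representation of $f(\cdot,y)\in\mathrm{CBF}$ gives $\sup_{z\in K}|f(z,y)|\le C_K\,f(M_K,y)$ with $M_K=\max_{z\in K}|z|$, and $\int_W f(M_K,y)\,p(dy)<\infty$ by A1)--A2), so Morera plus dominated convergence applies; likewise your collapse of the band to $[0,\pi]$ (connected image, excluded gap $(\pi,2\pi)$, vanishing boundary values on $(0,\infty)$) and the tacit identification of the CBF extension of $1/\mathcal{L}[\mu_t^y]$ with $e^{tf(z,y)}$ (uniqueness of analytic continuation) are sound.
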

\begin{proof}
From Theorem 9.5 of \cite{librobern} we know that the condition \eqref{condme} is equivalent to saying that there exists a function $\eta : (0, \infty) \to [0,1]$ satisfying $\int_0^1 \eta(t) t^{-1} dt< \infty$ and such that, for some $\beta \geq 0$,
\begin{align}
\mathcal{L} \left[ \mu \right] (\lambda) \, = \, & \exp \ll -\beta - \int_0^\infty \l \frac{1}{t} - \frac{1}{\lambda +t} \r \eta(t) dt \rr \notag \\
= \, & \exp \ll - \beta - \int_0^\infty \l 1-e^{-\lambda t} \r \nu(dt) \rr
\label{324}
\end{align}
for some L\'evy measure $\nu$.
Here we must have for all $y \in E$
\begin{align}
\mathcal{L} \left[ \mu_t^y \right] (\lambda) \, = \, \exp \ll -t \l a(y)+ \int_0^\infty \l 1-e^{-\lambda s} \r \nu(ds, y) \r \rr \in \mathrm{S}
\end{align}
and
\begin{align}
f(\lambda, y) \, = \,  a(y)+ \int_0^\infty \l 1-e^{-\lambda s} \r \nu(ds, y) \in \mathrm{CBF} 
\end{align}
Since $f \in \mathrm{CBF}$ the L\'evy measure $\nu(ds, y)$ has a density $\mathfrak{m}_y(s)$ such that $s \to \mathfrak{m}_y(s)$ is completely monotone for all fixed $y \in E$. This implies that $\mathfrak{m}_y(s)$ is the Laplace transform of a certain measure $m_{y}(dt)$, i.e. $\mathfrak{m}_{y}(s) = \int_0^\infty e^{-st} m_{y}(dt)$. 
Thus we can write
\begin{align}
 f(\lambda, y)  \, = \, &   a(y) +  \int_0^\infty \l 1-e^{-\lambda s} \r   \nu(ds, y) \notag \\
= \, &  a(y)  +  \int_0^\infty \l 1-e^{-\lambda s} \r  \int_0^\infty e^{-st} m_y(dt) ds \notag \\
= \, &  a(y) +   \int_0^\infty \int_0^\infty \l e^{-st}-e^{-s(\lambda +t)} \r ds \, m_y(dt)  \notag \\
= \, &  a(y) +   \int_0^\infty  \l \frac{1}{t}- \frac{1}{\lambda + t} \r  \, m_y(dt).
\label{calcul}
\end{align}
Since $\mu_t^y \in \mathrm{ME}$, for all $y \in E$ we must have that $m_y(dt)$ has a density $\eta_y(t)$ such that
\begin{equation}
\eta_y: (0, \infty) \to [0,1] \textrm{ with } \int_0^1 w^{-1} \eta_y(w)dw < \infty,
\label{corro}
\end{equation}
and this permits us to conclude
\begin{align}
\mathcal{L} \left[ \mu_t^p \right] (\lambda)  \, = \, \exp \ll - \mathds{E} a(Y) - \int_0^\infty  \l \frac{1}{t}- \frac{1}{\lambda + t} \r  \, \mathds{E} \eta_Y(t) dt \rr.
\end{align}
The fact that
\begin{align}
\mathds{E}  \eta_Y : (0, \infty) \to [0,1]
\end{align}
is verified since
\begin{align}
0 \leq \int_W \eta_y (\puntomio) p(dy) \leq \sup_{y \in W} \eta_y (\puntomio),  \textrm{ with }  \sup_{y \in W} \eta_y \in [0,1].
\end{align}
Furthemore note that under A2)
\begin{align}
\infty \, > \, & \int_0^\infty (s\wedge 1) \int_W \nu(ds, y) p(dy) \notag \\
 = \, & \int_W \int_0^\infty (s \wedge 1) \int_0^\infty e^{-st} \eta_y(t)\,dt \, ds \, p(dy) \notag \\
 = \, & \int_W \int_0^\infty \frac{1}{t^2} \l 1-e^{-t} \r \eta_y(t)\,dt \, p(dy) \notag \\
 \geq \, & \int_W \int_0^1\frac{1}{t^2} \l 1-e^{-t} \r \eta_y(t)\,dt \, p(dy) \notag \\
 \geq \, & \int_W \int_0^1\frac{1}{t} \eta_y(t)\,dt \, p(dy)
\end{align}
and therefore we have proved that
\begin{align}
\int_W \int_0^1\frac{1}{t} \eta_y(t)\,dt \, p(dy)  < \infty.
\end{align}
\end{proof}

\subsubsection{Generalized gamma Convolution}
A measure $\mu$ on $[0, \infty)$ is said to be a Generalized gamma convolution, and we write $\mu \in \mathrm{GGC}$ if
\begin{align}
\mathcal{L} \left[ \mu \right] (\lambda) \, = \, e^{-f(\lambda)}, \textrm{ where $f$ is a Thorin-Bernstein function.}
\end{align}
A quite important sublcass of Bernstein functions consists in those Bernstein functions whose derivative is a Stieltjes function. Such functions are called Thorin-Bernstein functions (we write $f \in \mathrm{TBF}$) and are of the form
\begin{align}
f(\lambda) \, = \, a + b \lambda + \int_0^\infty \l 1-e^{-\lambda t} \r \, \tau(t) dt
\end{align}
where $\int_0^\infty (t \wedge 1) \tau(t) dt < \infty$ and $t \to t \, \tau(t)$ is a completely monotone function. The class of $\mathrm{GGC}$ is the smallest class of sub-probability measures on $[0, \infty)$ closed under convolution and vague limits containing all gamma distributions (see Theorem 9.13 of \cite{librobern}).

\begin{prop}
Let $\Pr \ll \sigma^{f, y}(t) \in \puntomio \rr \, = \, \mu_t^y (\puntomio) \in \mathrm{GGC}$. For the distributed order subordinator $\sigma^{f, p}$ we have that
\begin{align}
\Pr \ll \sigma^{f, p} (t) \in \puntomio \rr \, = \,  \mu_t^p (\puntomio) \in \mathrm{GGC}.
\end{align}
\end{prop}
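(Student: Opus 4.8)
The plan is to follow exactly the template already used for the Bondesson and mixture-of-exponential cases: translate the membership $\mu_t^y \in \mathrm{GGC}$ into an analytic condition on the Laplace exponent $f(\cdot, y)$, perform the mixing, and then verify that the mixed exponent still satisfies that condition. By definition $\mu_t^y \in \mathrm{GGC}$ is equivalent to requiring that $\lambda \to f(\lambda, y) \in \mathrm{TBF}$ for every fixed $y$, so that each $f(\cdot, y)$ admits the representation
\begin{align}
f(\lambda, y) \, = \, a(y) + b(y)\lambda + \int_0^\infty \l 1-e^{-\lambda t} \r \tau_y(t)\,dt,
\end{align}
where $\int_0^\infty (t\wedge 1)\tau_y(t)\,dt < \infty$ and, crucially, $t \to t\,\tau_y(t)$ is completely monotone for every $y \in E$.

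First I would write down the mixed exponent. By the form \eqref{triplet} of the triplet of $\sigma^{f,p}$, the L\'evy measure of the distributed order subordinator is $\mathds{E}\nu(ds, Y) = \int_W \nu(ds, y) p(dy)$, which in the present situation has density $\mathds{E}\tau_Y(t) = \int_W \tau_y(t) p(dy)$, so that
\begin{align}
\mathds{E}f(\lambda, Y) \, = \, \mathds{E}a(Y) + \mathds{E}b(Y)\lambda + \int_0^\infty \l 1-e^{-\lambda t} \r \mathds{E}\tau_Y(t)\,dt.
\end{align}
The whole point is then to show that $\mathds{E}f(\puntomio, Y) \in \mathrm{TBF}$, i.e. that $t \to t\,\mathds{E}\tau_Y(t)$ is completely monotone.

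The key step is the preservation of complete monotonicity under mixing, and here I would reuse the device from the Bondesson proof. Since each $t \to t\,\tau_y(t)$ is completely monotone it is, by Bernstein's theorem, the Laplace transform of a measure $\kappa_y(du)$, i.e. $t\,\tau_y(t) = \int_0^\infty e^{-tu}\kappa_y(du)$. Tonelli's theorem then yields
\begin{align}
t\, \mathds{E}\tau_Y(t) \, = \, \int_W t\,\tau_y(t)\,p(dy) \, = \, \int_0^\infty e^{-tu} \int_W \kappa_y(du)\,p(dy),
\end{align}
which exhibits $t \to t\,\mathds{E}\tau_Y(t)$ as the Laplace transform of the measure $\int_W \kappa_y(du)\,p(dy)$, hence as a completely monotone function. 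The integrability $\int_0^\infty (t\wedge 1)\,\mathds{E}\tau_Y(t)\,dt < \infty$ follows from A2) exactly as in the mixture-of-exponential proof, again interchanging the order of integration. Combining these two facts gives $\mathds{E}f(\puntomio, Y) \in \mathrm{TBF}$, and therefore $\mu_t^p \in \mathrm{GGC}$.

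I expect the main obstacle to be the measurability and interchange of integration needed to legitimise the Laplace-transform mixing step: one has to know that $y \to \kappa_y$ can be chosen to depend measurably on $y$, so that $\int_W \kappa_y(du)\,p(dy)$ is a well-defined measure, and that Tonelli applies. This is routine but is the only point at which the mixture structure genuinely interacts with the analytic characterisation of $\mathrm{TBF}$; once it is granted the conclusion is immediate from the definition of $\mathrm{GGC}$.
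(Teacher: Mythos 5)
Your proposal is correct and follows essentially the same route as the paper's own proof: reduce $\mathrm{GGC}$ membership to showing $\mathds{E}f(\puntomio, Y) \in \mathrm{TBF}$, represent each $t \mapsto t\,\tau_y(t)$ as the Laplace transform of a measure via Bernstein's theorem, and interchange the integrals so that $t\,\mathds{E}\tau_Y(t)$ is exhibited as the Laplace transform of the mixed measure. Your additional remarks on the A2) integrability check and on measurability in $y$ are details the paper leaves implicit, but they do not change the argument.
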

\begin{proof}
Since $\mu_t^y \in \mathrm{GGC}$ we have that
$\lambda \to f(\lambda, y) \in \mathrm{TBF}$ for all $y \in E$ and thus
\begin{align}
f(\lambda, y) \, = \, a(y) + b(y) \lambda + \int_0^\infty \l 1-e^{-\lambda t} \r \, \tau_y(t) \, dt.
\end{align}
It is sufficient to prove that
\begin{align}
\mathds{E} f(\lambda, Y) \, = \,  \mathds{E} a(Y) + \mathds{E} b(Y) \lambda + \int_0^\infty \l 1-e^{-\lambda t} \r \, \mathds{E} \tau_Y(t) \, dt \in \mathrm{TBF}.
\end{align}
Note that since $\lambda \to f(\lambda, y) \in \mathrm{TBF}$, one has
\begin{equation}
t \to t \, \tau_y(t) \, = \, \int_0^\infty e^{-ts} \phi_y(ds), \textrm{ for all } y \in E,
\end{equation}
for a certain measure $\phi_y(ds)$. Thus we can write
\begin{align}
t \to t \, \mathds{E}  \tau_Y(t) \, = \, & t \int_W \tau_y(t) \, p(dy) \notag \\
= \, & \int_W \int_0^\infty e^{-ts} \phi_y(ds) \, p(dy) \notag \\
= \, & \int_0^\infty e^{-ts} \, \mathds{E} \phi_Y(ds),
\end{align}
which is a completely monotone function.
\end{proof}

\section{Special distributed order subordinators and inverse local times}
In this section we point out the relationships between special subordinators, distributed order subordinators and inverse local times of Markov processes.

A subordinator with Laplace exponent $f$ is said to be special if $\lambda/f(\lambda) \in \mathrm{BF}$. This is because a Bernstein function $f$ is said to be special if $\lambda /f(\lambda) \in \mathrm{BF}$. The collection of all special Bernstein functions will be denoted by $\mathrm{SBF}$. We recall that in case $f \in \mathrm{SBF}$ we clearly have that $f^\star = \frac{\lambda}{f} \in \mathrm{SBF}$. We call $f^\star = \lambda /f$ the conjugate of $f$.
The most important property of special subordinators concerns their potential measures
\begin{align}
U^f (dx) \, = \,  \mathds{E}  \int_0^\infty \mathds{1}_{\ll \sigma^f(t) \in dx \rr} dt
\end{align}
which is such that
\begin{align}
\mathcal{L} \left[ U^f(dx) \right] (\lambda) \, = \, \frac{1}{f(\lambda)}.
\label{laplpotmeas}
\end{align}
It is well-known that $\sigma^f$ is special if and only if
\begin{align}
U^f (dx) \, = \, c \delta_0(dx) + u(x) dx
\end{align}
for $c \geq 0$ and for some non-increasing function $u:(0, \infty) \to (0, \infty)$ satisfying $\int_0^1 u(x) dx< \infty$.
The reader can consult \citet{librobern}, Chapter 10, or \citet{song}, for further information on special Bernstein functions and the related subordinators.

In the upcoming theorem we will need the following information on local times.
Let $X = \l \Omega, \mathpzc{M}, \mathpzc{M}_t, X_t, \theta_t, P^z \r$ be a temporally homogeneous Markov process on the Polish space $E$. As usual given $\l E, \mathpzc{E} \r$ we have $E_\Delta = E \cup \ll \Delta \rr$ and $\mathpzc{E}_\Delta$ is the $\sigma$-algebra in $E_\Delta$ where $\Delta$ is clearly a point not in $E$ for which for all $t \in [0, \infty]$, $X_t(\omega) = \Delta$ implies $X_s(\omega) = \Delta$, for all $s \geq t$. $\l \Omega, \mathpzc{M} \r$ is a measurable space and $\mathpzc{M}_t$ an increasing family $\ll \mathpzc{M}_t : t \in [0, \infty] \rr$. With $\theta_t$ we denote the translation operator i.e. the map $\theta_t: \Omega \to \Omega$ such that $\theta_\infty \omega = \omega_\Delta$, for all $\omega$ and where $\omega_\Delta$ is a distinguished point of $\Omega$. Due to homogeneity we write $X_t \circ \theta_h = X_{t+h}$, for all $t, h \in [0, \infty]$. Let
\begin{align}
T_tu \, = \, \int_E P_t(z, dy) \, u(dy) \, = \, \mathds{E}^z u(X_t)
\end{align}
be the semigroup associated with $X_t$ and let
\begin{align}
R^\lambda u \, = \, \mathds{E}^z \l \int_0^\infty e^{-\lambda t} u(X_t) dt \r \, = \, \int_0^\infty e^{-\lambda t} T_tu \, dt
\end{align}
be the corresponding $\lambda$-potential operator. In what follows we suppose that $X$ is in duality with $\widehat{X}$, i.e.
\begin{align}
& R^\lambda u(z) \, = \, \mathds{E}^z \l \int_0^\infty e^{-\lambda t} u(X_t) dt \r \, = \, \int_E r^\lambda (z, x) u(x) \xi(dx) \label{nohat} \\
& \widehat{R}^\lambda u(z) \, = \, \widehat{\mathds{E} }^z \l \int_0^\infty e^{-\lambda t} u(\widehat{X}_t) dt \r \, = \, \int_E r^\lambda (x, z) u(x) \xi(dx)
\label{hat}
\end{align}
for some $\sigma$-finite measure $\xi$ on $E$, where $\widehat{X}= \l \Omega, \widehat{\mathpzc{M}}, \widehat{\mathpzc{M}_t}, \widehat{X}_t, \widehat{\theta}_t, \widehat{P}^z   \r$ is a temporally homogeneous Markov process on $E$. See \citet{b-g, blumget} for details on duality and potential theory. This implies that the measure on $E$
\begin{align}
U^\lambda (dy) \, = \, \int_0^\infty e^{-\lambda t} P_t(z, dy)
\end{align}
is absolutely continuous with respect to $\xi$ with density $r^\lambda$, i.e.
\begin{align}
U^{\lambda} (dy) \, = \, r^\lambda (z, y) \xi (dy).
\end{align}
The measure $\xi$ is said to be the reference measure.
Consider now a family of $\mathpzc{M}_t$-measurable r.v.'s $\ll A_t, t \geq 0 \rr$ on $\l \Omega, \mathpzc{M}, \mathpzc{M}_t \r$ such that the map $t \to A_t$ is a.s. continuous and non decreasing with $A_0=0$ and satisfies $A_{t+s}= A_t + A_s \circ \theta_t$ for all $s,t$. Let $R_A(\omega)= \inf \ll t : A_t(\omega)>0 \rr$; the family $\ll A_t, t \geq 0 \rr$ is said to be a local time at $y$ of $X$ if $P^y(R_A =0)=1$ and for all $x \neq y$, $P^y(R_A =0)=0$.

A necessary and sufficient condition for the existence of the local time in some $y \in E$ of a Markov process is that $y$ is regular for itself (see \cite{b-g} Theorem 3.13) i.e.
\begin{align}
P^{y} \l T_{y} = 0 \r = 1, \textrm{ where } T_{y} \, = \, \inf \ll t \geq 0 : X(t) = y \rr.
\end{align}
However in the above framework Proposition 7.3 of \cite{blumget} holds and thus one has the following equivalent conditions for a point $y \in E$ to be regular for itself
\begin{enumerate}
\item $y$ is regular for $y$
\label{1}
\item $r^\lambda (z, y) \leq r^\lambda (y, y)< \infty$, for all $z \in E$
\label{2}
\item The function $z \to r^\lambda (z, y)$ is bounded and continuous at $z=y$
\label{3}
\end{enumerate}
Assume that the function $z \to r^\lambda(z, y)$ is lower semi-continuous for all $y \in E$. Denote by $L(y, t)$ the local time at $y$. The random measure $dL_t$ supported on $\ll t : X_t = y \rr$ is such that
\begin{align}
r^\lambda (x, y) \, = \, \mathds{E}^x \int_0^\infty e^{-\lambda t} dL(y, t)
\label{fumaparei}
\end{align}
and it is true that, a.s.,
\begin{equation}
L (y, t) \, = \, \lim_{\epsilon \to 0} \int_0^t \delta_{\epsilon, y}^\xi(X_s) ds.
\label{deltastrana}
\end{equation}
With the symbol $\delta_{\epsilon, y}^\xi(x)$ we denote a family of continuous functions on $E$ with compact support $S_\epsilon$ in a neighborhood of $y$ such that $\lim_{\epsilon \to 0} S_\epsilon = \ll y \rr$ and for which
\begin{equation}
\int \delta_{\epsilon, y}^\xi (x) \xi(dx) = 1.
\end{equation}

It is well-known that the inverse local time at $y$ of a Markov process is a subordinator  (see, for example, \citet{bertoinb}, Chapter IV, Theorem 8, or \citet{b-g}, page 219), provided that such a local time exists. 
In the framework introduced above it is well known that the inverse local time at $y$ of $X_t$, say
\begin{align}
L^{-1}(y, t) \, = \, \inf \ll s>0 : L(y, s) >t \rr,
\label{definvlot}
\end{align}
exists and is a subordinator with Laplace exponent $1/r^\lambda(y, y)$.

\begin{te}
Let $Y$ be a random variable on $W \subseteq E$ with law $p$. Let 
\begin{align}
\lambda \to f(\lambda, y) \, = \, a(y) + b(y) \lambda +\int_0^\infty \l 1-e^{-\lambda s} \r \nu(ds, y)
\end{align}
be a special Bernstein function for all fixed $y \in E$ and let $f^\star = \lambda /f$ be the conjugate of $f$. Assume that the L\'evy measure of $f^\star$, say $\nu^\star (\puntomio)$, and the probability measure $p$ are related by A2). We have the following results.
\begin{itemize}
\item[i)] The function $\breve{f^\star} (\lambda)\, = \, \mathds{E} f^\star (\lambda, Y)$ is a special Bernstein function and has representation
\begin{align}
\breve{f^\star}(\lambda) \, = \, \mathds{E} f^\star (\lambda, Y)\, = \, \mathds{E} a^\star(Y) + \mathds{E} b^\star(Y)\lambda + \int_0^\infty \l 1-e^{-\lambda s} \r \mathds{E}  \nu^\star (ds, Y),
\end{align}
where
\begin{align}
b^\star (y) \, = \,
\begin{cases}
0, \qquad & b(y) > 0, \\
\frac{1}{a(y) + \nu((0, \infty), y)}, & b(y) =0,
\end{cases},
\; a^\star (y) \, = \, \begin{cases}
0, \qquad &a(y) >0, \\
\frac{1}{b(y) + \int_0^\infty t \nu(dt, y)}, & a(y) = 0,
\end{cases}
\label{312}
\end{align}
provided that $p$ is such that A1) is fulfilled for the function $a^\star(y)$ and $b^\star(y)$.

\item[ii)] Suppose that $f(\puntomio, y)$ is the Laplace exponent of the inverse local time at $y$ of $X_t$, where $X_t$ is the process above. Assume that one of the conditions \eqref{1}, \eqref{2}, \eqref{3} holds for all $y \in W$. Let $P^p = \int_W P^y p(dy)$. Let $\bar{X} = \l \Omega , \bar{\mathpzc{M}}, \bar{\mathpzc{M}}_t, \bar{X}_t, \bar{\theta}_t, P^p \r$ be the Markov process on $E$ with initial distribution $p$ such that $\bar{X}$ is identical in law at $X$ under $p(dy)= \delta_y$, for all $y \in W$. The conjugate of $\breve{f^\star}$, i.e. the function $\breve{f^\star}^\star (\lambda) = \frac{\lambda}{\mathds{E} f^\star (\lambda, Y)}$, is the Laplace exponent of the inverse local time of $\bar{X}_t$ at its random starting point $\bar{X}_0$.

\item[iii)] Denote the inverse local time of $\bar{X}_t$ at $\bar{X}_0$ by $L^{-1}(\bar{X}_0, t)$ and by $U^{f,p}(dt)$ its potential measure. Let $U^{f,y}(dt)$ be the potential measure of the inverse local time at $y$ of $X_t$. We have that
\begin{align}
U^{f,p}(dt) \, = \,  & \int_W U^{f, y}(dt) \, p(dy) \notag \\
= \, &\int_W \l b^\star(y) \delta_0(dt) + \bar{\nu^\star} (t, y) \, dt \r p(dy)
\end{align}
where $\bar{\nu^\star}(s, y) \, = \, a^\star(y) + \nu^\star((t, \infty),y)$.

\end{itemize}
\end{te}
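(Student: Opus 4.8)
The plan is to reduce all three parts to statements about potential measures, leaning on the characterization of special subordinators recalled above: a subordinator is special precisely when its potential measure has the form $c\delta_0(dx) + u(x)\,dx$ with $u$ non-increasing and integrable near $0$. The single-$y$ input is that, since $f(\puntomio, y) \in \mathrm{SBF}$, its conjugate $f^\star(\puntomio, y) = \lambda/f(\puntomio, y)$ is again a special Bernstein function whose L\'evy triplet is $(a^\star(y), b^\star(y), \nu^\star(ds, y))$, with drift and killing given by the classical conjugation formulas \eqref{312} (see \citet{librobern}, Chapter 10). Applying the integration by parts \eqref{berncode} to $f^\star(\puntomio, y)$ shows that the potential measure of $\sigma^{f, y}$ (the subordinator with exponent $f(\puntomio, y)$) is
\[
U^{f,y}(dt) = b^\star(y)\delta_0(dt) + \bar{\nu^\star}(t,y)\,dt, \qquad \bar{\nu^\star}(t,y) = a^\star(y) + \nu^\star((t,\infty), y),
\]
whose density $t \mapsto \bar{\nu^\star}(t, y)$ is non-increasing; this is simply the special-subordinator characterization read for each fixed $y$.

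For part i), mixing these triplets against $p$ produces $\breve{f^\star}(\lambda) = \mathds{E} f^\star(\lambda, Y)$ with triplet $(\mathds{E}a^\star(Y), \mathds{E}b^\star(Y), \mathds{E}\nu^\star(ds, Y))$, and that this is a well-defined Bernstein function is exactly the content of Section 2 once A1) is imposed on $a^\star, b^\star$ and A2) on $\nu^\star$, which are the standing hypotheses. The delicate point is that $\breve{f^\star}$ is \emph{special}, since a mixture of special Bernstein functions is in general not special. The trick is not to look at $\breve{f^\star}$ directly but at $\breve{f^\star}(\lambda)/\lambda = \mathds{E}[1/f(\lambda, Y)] = \int_W \mathcal{L}\left[U^{f,y}\right](\lambda)\, p(dy)$, i.e. at the Laplace transform of the mixed measure $M(dt) = \mathds{E}b^\star(Y)\delta_0(dt) + \mathds{E}\bar{\nu^\star}(t, Y)\,dt$. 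Its density $t \mapsto \mathds{E}\bar{\nu^\star}(t, Y)$ is a mixture of non-increasing functions, hence non-increasing, and is integrable near $0$ by A2) (the same computation displayed in the proof for the class $\mathrm{ME}$). Thus $M$ is of the form required by the special-subordinator characterization, so $\lambda \mapsto \lambda/\breve{f^\star}(\lambda) = 1/\mathcal{L}[M](\lambda)$ is a (special) Bernstein function by \eqref{laplpotmeas}; this is exactly the assertion that $\breve{f^\star}$ is special.

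For parts ii) and iii), I would work under $P^p = \int_W P^y\, p(dy)$ and condition on the starting point. By the hypothesis that $\bar{X}$ agrees in law with $X$ under $p(dy) = \delta_y$, conditionally on $\bar{X}_0 = y$ the inverse local time of $\bar{X}$ at $\bar{X}_0$ coincides with $L^{-1}(y, \puntomio)$, a genuine subordinator with exponent $f(\puntomio, y)$ and potential measure $U^{f,y}$. Computing the potential measure of $L^{-1}(\bar{X}_0, \puntomio)$ through its marginals gives
\[
\int_0^\infty \mathds{E}^p e^{-\lambda L^{-1}(\bar{X}_0, t)}\,dt = \mathds{E}\int_0^\infty e^{-t f(\lambda, Y)}\,dt = \mathds{E}\frac{1}{f(\lambda, Y)} = \frac{\breve{f^\star}(\lambda)}{\lambda},
\]
so this potential measure has Laplace transform $1/\breve{f^\star}^\star(\lambda)$; by \eqref{laplpotmeas} this identifies $\breve{f^\star}^\star = \lambda/\breve{f^\star}$ as the Laplace exponent associated with the inverse local time at $\bar{X}_0$, which is ii). Finally, by Fubini the same potential measure equals $\int_W U^{f,y}(dt)\,p(dy) = \int_W \left( b^\star(y)\delta_0(dt) + \bar{\nu^\star}(t,y)\,dt \right) p(dy)$, which is iii). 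The main obstacle is the specialness in i); a secondary point requiring care is that $L^{-1}(\bar{X}_0, \puntomio)$ is a subordinator only conditionally on $\bar{X}_0$ (its unconditional marginals form the mixture $\mathds{E}e^{-t f(\lambda, Y)}$), so the exponent $\breve{f^\star}^\star$ must be read off the potential measure rather than from a single-exponent Laplace transform.
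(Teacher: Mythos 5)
Your part i) follows the paper's proof essentially verbatim: both arguments mix the potential measures $U^{f,y}(dt)=b^\star(y)\delta_0(dt)+\bar{\nu^\star}(t,y)\,dt$, check that the mixture still consists of an atom at the origin plus a non-increasing density integrable at $0$ (under A1), A2) for the starred data), and then invoke the characterization of special subordinators together with \eqref{laplpotmeas} to conclude that $\lambda/\breve{f^\star}(\lambda)$, and hence $\breve{f^\star}$, is special. Your part iii) is also correct, and in fact more self-contained than the paper's: you obtain $U^{f,p}(dt)=\int_W U^{f,y}(dt)\,p(dy)$ directly by conditioning on $\bar{X}_0$ and Fubini, whereas the paper deduces it from its formula \eqref{419}.

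The genuine divergence is in part ii), and it sits exactly where the paper's proof is most delicate. The paper does not read the exponent off the potential measure: it asserts that $L^{-1}(\bar{X}_0,\cdot)$ under $P^p$ is itself a (possibly killed) subordinator, deriving the factorization \eqref{semig} from the additivity \eqref{statet}, the strong Markov property at $L^{-1}(\bar{X}_0,t)$, and the identity $\bar{X}(L^{-1}(\bar{X}_0,t))=\bar{X}_0$, and concluding $\mathds{E}^p e^{-\lambda L^{-1}(\bar{X}_0,t)}=e^{-t\breve{f^\star}^\star(\lambda)}$ in \eqref{419}. Your closing caveat --- that $L^{-1}(\bar{X}_0,\cdot)$ is a subordinator only conditionally on $\bar{X}_0$, its unconditional marginals being the mixture $\mathds{E}\,e^{-tf(\lambda,Y)}$ --- is therefore not a secondary point: it is in direct conflict with \eqref{semig}--\eqref{419}, and your computation is the correct one. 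Conditioning on $\bar{X}_0=y$ gives $\mathds{E}^p e^{-\lambda L^{-1}(\bar{X}_0,t)}=\int_W e^{-tf(\lambda,y)}p(dy)$, and already for $s=t$ Cauchy--Schwarz yields $\int_W e^{-2tf(\lambda,y)}p(dy)\geq \left(\int_W e^{-tf(\lambda,y)}p(dy)\right)^2$, with equality only if $y\mapsto e^{-tf(\lambda,y)}$ is $p$-a.s. constant; hence the second equality in \eqref{semig} --- which pulls the random variable $\mathds{E}^{\bar{X}(L^{-1}(\bar{X}_0,t))}e^{-\lambda L^{-1}(\bar{X}_0,s)}=e^{-sf(\lambda,\bar{X}_0)}$ out of the $P^p$-expectation as if it were the constant $\mathds{E}^p e^{-\lambda L^{-1}(\bar{X}_0,s)}$ --- requires an independence that fails whenever $f(\lambda,\cdot)$ is not $p$-a.s. constant (it is harmless in the spatially homogeneous case, as the remark following the theorem acknowledges). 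Consequently your route, which identifies $\breve{f^\star}^\star$ only through the Laplace transform $\lambda^{-1}\breve{f^\star}(\lambda)$ of the potential measure of $L^{-1}(\bar{X}_0,\cdot)$ via \eqref{laplpotmeas}, proves part ii) in the only sense in which it can hold in general; you should, however, say explicitly that this is a weaker reading of ``Laplace exponent'' than the one the paper's own proof asserts, since the marginal laws of $L^{-1}(\bar{X}_0,\cdot)$ under $P^p$ are not those of the subordinator with exponent $\breve{f^\star}^\star$.
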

\begin{proof}
\begin{enumerate}
\item[i)] Since $f (\lambda, y) \in \mathrm{SBF}$ for all fixed $y \in E$ we have that $f^\star(\lambda, y) = \lambda/f(\lambda, y) \in \mathrm{BF}$. In particular by applying Theorem 10.3 of \cite{librobern} we may write
\begin{align}
f^\star (\lambda, y) \, = \, a^\star(y) + b^\star(y) \lambda + \int_0^\infty \l 1-e^{-\lambda s} \r \nu^\star(ds,y)
\end{align}
where $a^\star (y)$ and $b^\star (y)$ are defined in \eqref{312}.
Let $\sigma^{f,y}$ be the subordinator with Laplace exponent $f(\lambda, y)$. Since $\lambda \to f(\lambda, y) \in \mathrm{SBF}$ we have that the corresponding subordinator $\sigma^{f, y}$ has potential measure (parametrized by $y$)
\begin{align}
U^{f, y} (dt) \, = \, b^\star(y) \delta_0(dt) + u_y(t) dt
\label{nonsene}
\end{align}
for a function $t \to u_y(t) : (0, \infty) \to (0, \infty)$ non-increasing and satisfying
\begin{align}
\int_0^1 u_y(t) dt < \infty, \qquad \textrm{for all } y \in E.
\label{4119}
\end{align}
In particular we deduce from Theorem 10.3, page 94, of \cite{librobern} that
\begin{align}
u_y(t) \,= \, \bar{\nu^\star}(t, y) \, = \, \nu^\star((t, \infty), y) +a^\star (y).
\end{align}
Now consider the measure
\begin{align}
\int_W U^{f, y}(dt) p(dy) \, = \, \mathds{E}  b^\star(Y) \delta_0(dt) + \mathds{E}  \bar{\nu^\star} (t, Y) \, dt.
\label{consmeas}
\end{align}
Observe that under $A1)$ and $A2)$ \eqref{consmeas} is well defined since the integrals converge.
The function $t \to \mathds{E}  \bar{\nu^\star} (t, y):(0, \infty) \to (0, \infty)$ is non-increasing and (under A1) and A2)) satisfies \eqref{4119} and thus \eqref{consmeas} is the potential measure of some special subordinator. In particular since
\begin{align}
\mathcal{L} \left[ \int_W U^{f, y}(dt) p(dy) \right] (\lambda) \, = \, \lambda^{-1}\mathds{E}  f^\star (\lambda, Y) \, = \, \frac{1}{\frac{\lambda}{\mathds{E} f^\star (\lambda, Y)}}
\end{align}
we use \eqref{laplpotmeas} and we deduce that $\frac{\lambda}{\mathds{E} f^\star (\lambda, Y)} \in \mathrm{SBF}$ which implies that $\mathds{E} f^\star (\lambda, Y) \in \mathrm{SBF}$.
\item[ii)] Now we consider the inverse local time $L^{-1}(\bar{X}_0, t)$. Since we assume that one of the conditions \eqref{1}, \eqref{2}, \eqref{3} holds for all $y \in W \subseteq E$ we have that the points $y \in W$ are regular since
\begin{align}
P^p \ll T_{\bar{X}_0} = 0 \rr \, = \, 1\textrm{ where } T_{\bar{X}_0} \, = \, \inf \ll t \geq 0 : \bar{X}_t = \bar{X}_0 \rr,
\end{align}
and this guarantees that such a local time exists. We recall that
\begin{align}
\mathds{E}^y \int_0^\infty e^{-\lambda L^{-1}(y, t)} dt\, = \, \mathds{E}^y \int_0^\infty e^{-\lambda t }dL(y, t) \, \stackrel{\eqref{fumaparei}}{=} \, r^\lambda (y, y).
 \label{417}
\end{align}
By Proposition 2.3 in chapter V of \cite{b-g}
\begin{align}
L^{-1}(y, t+s) \, = \, L^{-1}(y, t) + L^{-1}(y, s) \circ \theta_{L^{-1}(y, t)}
\label{statet}
\end{align}
and by using the definition \eqref{definvlot} of inverse local time we deduce that $\bar{X} \l L^{-1}(\bar{X}_0, t) \r = \bar{X}_0$ and therefore we perform calculations similar to that of Theorem 3.17 Chapter V of \cite{b-g} and we write
\begin{align}
  \mathds{E}^p e^{-\lambda L^{-1}(\bar{X}_0, t+s)} \, = \, & \mathds{E}^p e^{-\lambda L^{-1}(\bar{X}_0,t)} \mathds{E}^{\bar{X}\l L^{-1}(\bar{X}_0, t)\r} e^{-\lambda L^{-1}(\bar{X}_0,s)} \notag \\
 = \, & \mathds{E}^p e^{-\lambda L^{-1}(\bar{X}_0,t)} \mathds{E}^p e^{-\lambda L^{-1}(\bar{X}_0,s)}.
 \label{semig}
\end{align} 
Now in view of \eqref{fumaparei} and \eqref{deltastrana} we may write
\begin{align}
\mathds{E}^p \int_0^\infty e^{-\lambda L^{-1}(\bar{X}_0, t)} dt \, = \,&  \mathds{E}^p \int_0^\infty e^{-\lambda t} dL\l \bar{X}_0, t \r \notag \\
= \, & \int_W r^{\lambda}(y, y) p(dy)
\end{align}
and from \eqref{semig} we conclude that
\begin{align}
\mathds{E}^p e^{-\lambda L^{-1}(\bar{X}_0,t)} \, = \, e^{-t k} \textrm{ with } k = \frac{1}{\int_W r^{\lambda}(y, y) p(dy)}.
\label{421}
\end{align}
Now observe that since we suppose that $f(\lambda, y)$ is the Laplace exponent of the inverse local time of $X_t$ at $y$ we must have
\begin{align}
f(\lambda, y) \, = \, & a(y) + b(y) \lambda + \int_0^\infty \l 1-e^{-\lambda s} \r \nu(ds, y) \notag \\
= \, & \frac{1}{r^\lambda (y, y)}
\end{align}
and thus we get that
\begin{align}
\mathds{E}^p e^{-\lambda L^{-1}(\bar{X}_0, t)} \, = \, &  \exp \ll -t \frac{1}{\int_W \frac{1}{f(\lambda, y)}p(dy)}  \rr \notag \\
= \, &\exp \ll -t \frac{\lambda}{\int_W  f^\star (\lambda, y) p(dy)}  \rr \notag \\
= \, & \exp \ll -t \breve{f^\star}^\star (\lambda)  \rr.
\label{419}
\end{align}
Therefore
\begin{align}
\mathds{E}^p e^{-\lambda L^{-1}(\bar{X}_0,t)} \, = \, e^{-t \breve{f^\star}^\star (\lambda)}.
\end{align}
Note that the process $L^{-1}(\bar{X}_0, t)$ is clearly non-decreasing and has stationary independent increments in the ordinary sense if 
\begin{equation}
\lim_{\lambda \to 0} \int_W r^\lambda (y, y)p(dy) = \infty.
\label{killing}
\end{equation}
By adapting Proposition 3.19, chapter V, of \cite{b-g} we may prove that 
\begin{align}
& P^p \ll \bigcap_{j=1}^n \ll L^{-1}(\bar{X}_0, t_j) - L^{-1} \l \bar{X}_0, t_{j-1} \r \rr \in B_j ; L^{-1}(\bar{X}_0, t_n) < \infty \rr \notag \\
 = \, & \prod_{j=1}^n P^p \ll L^{-1}(\bar{X}_0, t_j - t_{j-1}) \in B_j \rr
\end{align}
for $B_j$, $j=1, \cdots, n$, disjoint sets and $0 = t_0 < \cdots < t_n$ by using \eqref{statet} and the fact that $\bar{X} \l L^{-1}\l \bar{X}_0, t \r \r = \bar{X}_0$ under $L^{-1}\l \bar{X}_0, t \r< \infty$. From \eqref{421} we know that this last condition is verified under \eqref{killing}. If \eqref{killing} holds we have
\begin{align}
\lim_{\lambda \to 0} \breve{f^\star}^\star (\lambda) \, = \, \lim_{\lambda \to 0} \frac{1}{\mathds{E} r^\lambda(Y,Y)} \, = \,  0.
\end{align}
If instead $\lim_{ \lambda \to 0} \int_W r^\lambda(y, y)p(dy) < \infty$ one has
\begin{align}
0 < \lim_{\lambda \to 0} \breve{f^\star}^\star (\lambda)  \, < \infty
\end{align}
and we get what in the literature is known as a killed subordinator. We can apply Theorem 3.21, chapter V, of \cite{b-g} and state that there exists a subordinator $\sigma(t)$ and an independent non-negative r.v. $\zeta$ on some $\l \Omega^\star, \Sigma, P \r$ with $P \ll \zeta \geq t \rr = \exp \ll -t\lim_{\lambda \to 0}  \frac{1}{\int_W r^\lambda (y, y)p(dy) } \rr$ such that
\begin{align}
\overline{L^{-1}}(\bar{X}_0, t) \, = \, \begin{cases}
\sigma(t), \qquad & t < \zeta, \\
\infty, & t \geq \zeta,
\end{cases}
\end{align}
and $L^{-1} \l \bar{X_0}, t \r$ are stochastically equivalent.
\item[iii)] In view of \eqref{nonsene} we can write
\begin{align}
U^{f,y}(dt) \, = \, b^\star(y) \delta_0(dt) + \bar{\nu^\star} (t, y) \, dt
\end{align}
and therefore
\begin{align}
\mathcal{L} \left[ \mathds{E} b^\star(Y) \delta_0(dt) + \mathds{E}  \bar{\nu^\star}(t, Y)dt \right] (\lambda) \, = \, \frac{\mathds{E} f^\star(\lambda, Y)}{\lambda} \, = \, \frac{1}{\frac{\lambda}{\mathds{E} f^\star(\lambda, Y)}}.
\end{align}
Now from \eqref{419} we can write
\begin{align}
\mathcal{L} \left[ \mathds{E}^p \int_0^\infty \mathds{1}_{\ll L^{-1}(\bar{X}_0, x) \in dt \rr} dx \right] (\lambda) \, = \, \frac{1}{\frac{\lambda}{\mathds{E} f^\star(\lambda, Y)}},
\end{align}
and the proof is complete.
\end{enumerate}
\end{proof}
In the previous Theorem the process $\bar{X}_t$ may be interpreted as the process $X_t$ started at the random point $Y$ in $W$. It is therefore clear that if the process $X_t$ is spatially homogeneous the theorem is still valid but the dependence on the starting point $y$ disappear. Heuristically this is due to the fact that in this case the local time behaves in the same way at all the regular points.

\section{Distributed order integro-differential operators and governing equations}
In this section we introduce distributed order integro-differential operators by means of which we write the governing equations of the distributed order subordinators. Furthermore we are interested in the governing equations of their inverse processes. We will consider the inverse of $\sigma^{f, p}(t)$, $t>0$, under the assumption $\nu(0, \infty)=\infty$, defined as
\begin{align}
L^{f, p} (t) \, = \, \inf \ll s \geq 0 : \sigma^{f, p} (s) > t  \rr
\label{definizioneinverso}
\end{align}
for which
\begin{align}
\ll L^{f, p} (t) > x \rr \, = \, \ll \sigma^{f, p} (x) < t \rr.
\label{proprietàinverso}
\end{align}

\subsection{Distributed order integro-differential operators}
According to Theorem 4.1 of \citet{meertri} we know that the density of the inverse of a subordinator is the fundamental solution in the mild sense of the pseudo-differential problem
\begin{align}
f \l \partial_t \r u(x, t) \, = \, -	\frac{\partial}{\partial x} u(x, t) +\delta(x) \nu(t, \infty),
\end{align}
where $f(\partial_t)$ is the following inverse Laplace transform
\begin{align}
f(\partial_t ) u \, = \, \mathcal{L}^{-1} \left[ f(\lambda) \mathcal{L} \left[ u   \right] (\lambda) - \lambda^{-1} f(\lambda) u(0) \right] (t).
\label{invlaplmeer}
\end{align}
According to \cite{meertri} a solution to a space-time pseudo-differential equation is said to be mild if its Fourier-Laplace of Laplace-Laplace transform solves the corresponding algebraic equation. Clearly if $f(\lambda) = \lambda^\alpha$ we retrive from \eqref{invlaplmeer} the Dzerbayshan-Caputo derivative.

In \citet{kochu} the author introduced operators of the form
\begin{align}
\mathbb{D}_{(k)}u \, = \, \frac{d}{dt} \int_0^t k(t-\tau) u(\tau) d\tau -k(t) u(0)
\label{derivatakochu}
\end{align}
under assumptions on the Laplace transform of $k(t)$ (see formula (1.6) of \cite{kochu}). Clearly if $k(t) = \frac{t^{-\alpha}}{\Gamma(1-\alpha)}$ one retrives from \eqref{derivatakochu} the regularized Riemann-Liouville fractional derivative having Laplace symbol $\mathcal{L} \left[ \mathbb{D}_{(k)}u \right] (\lambda) = \lambda^\alpha \widetilde{u}- \lambda^{\alpha -1} u(0)$. In the present work we follow the last approach (in a way similar to \cite{toaldoconv}) but we focus on the distributed order case. We introduce the following distributed order integro-differential operator.
\begin{defin}
\label{dev1}
Let $u$ be an absolutely continuous function on the interval $[c, d]$. Let $f$ be the Bernstein function as in \eqref{bern2var} and assume that the function $s \to \bar{\nu}(s,y)$ is absolutely continuous on $(0, \infty)$ for all $y \in E$. We define the generalized distributed order Riemann-Liouville derivative as
\begin{align}
_c\mathcal{D}_t^{f, p} u(t) \, = \, \mathds{E}  b(Y)\frac{d}{dt}u(t) + \frac{d}{dt}  \int_c^{t}   u( s) \mathds{E}  \bar{\nu}(t-s, Y) ds, \qquad c < t < d,
\label{defriemvar}
\end{align}
where
\begin{align}
\mathds{E}  \bar{\nu}(s, Y)  \, = \, \mathds{E}  \l a(Y) + \nu \l (s, \infty), Y \r \r,
\label{47}
\end{align}
and $Y$ is a r.v. with law $p$ on $W \subseteq E$.
If $\mathds{E} b(Y) = 0$ formula \eqref{defriemvar} makes sense for a continuous function $u$ as it happens for the classical distributed order fractional derivative (\cite{meerdo} formula (2.6) and (2.7)).
\end{defin}
By comparing Definition \ref{dev1} with the operator $f(\partial_t)$ (\cite{meertri}) defined as in \eqref{invlaplmeer} and with \eqref{toar}, \eqref{toac} (\cite{toaldoconv}) we note the following relationships.
\begin{lem}
\label{dafare}
We have that
\begin{align}
&\mathcal{L} \left[ _0\mathcal{D}_t^{f, p} u(t) \right] (\lambda) \, = \, \mathds{E}  f(\lambda, Y) \widetilde{u}(\lambda) - \mathds{E} b(Y) u(0) \label{primonellemma}  \\
&\mathcal{L} \left[ _0\mathcal{D}_t^{f, p} u(t) - \mathds{E} \bar{\nu}(t, Y) u(0) \right] (\lambda) \, = \, \mathds{E} f(\lambda, Y) \widetilde{u}(\lambda)- \lambda^{-1} \mathds{E} f(\lambda, Y) u(0) \label{secondonellemma}
\end{align}
\end{lem}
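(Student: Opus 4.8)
The plan is to compute the Laplace transform of each summand of $_0\mathcal{D}_t^{f, p}$ in \eqref{defriemvar} separately and then recombine, the whole computation being a mixed (i.e. $p$-averaged) version of the one-parameter calculus already used for \eqref{toar}. The pivotal ingredient is the integrated representation \eqref{berncode}: rearranging it gives, for each fixed $y$,
\begin{align}
\int_0^\infty e^{-\lambda s} \bar{\nu}(s, y) \, ds \, = \, \frac{f(\lambda, y)}{\lambda} - b(y).
\end{align}
Since $\bar{\nu}(s, y) = a(y) + \nu((s, \infty), y) \geq 0$ and the right-hand side is finite for each $\lambda > 0$ under A1) and A2), Tonelli's theorem lets me integrate against $p(dy)$ and interchange the order of integration to obtain
\begin{align}
\mathcal{L}\left[ \mathds{E} \bar{\nu}(\puntomio, Y) \right](\lambda) \, = \, \frac{\mathds{E} f(\lambda, Y)}{\lambda} - \mathds{E} b(Y).
\end{align}
This is the only step in which the mixing hypotheses are genuinely used, and it is the one I would be most careful about.

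Next I would handle the two terms of \eqref{defriemvar}. For the drift term, the absolute continuity of $u$ yields the standard rule $\mathcal{L}[u'](\lambda) = \lambda \widetilde{u}(\lambda) - u(0)$, so that the first summand contributes $\mathds{E} b(Y)\l \lambda \widetilde{u}(\lambda) - u(0) \r$. For the second summand I would recognise the inner integral as the convolution $\l u * \mathds{E} \bar{\nu}(\puntomio, Y) \r(t)$; the convolution theorem together with the displayed formula for $\mathcal{L}[\mathds{E}\bar{\nu}(\puntomio, Y)]$ gives its transform as $\widetilde{u}(\lambda)\l \mathds{E} f(\lambda, Y)/\lambda - \mathds{E} b(Y) \r$. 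Because this convolution vanishes at $t = 0$ (its integrand is locally integrable, as $\int_0^1 \bar{\nu}(s,y)\,ds = V(y) < \infty$ by Tonelli), differentiation multiplies the transform by $\lambda$ without a boundary term, producing $\widetilde{u}(\lambda)\l \mathds{E} f(\lambda, Y) - \lambda \mathds{E} b(Y) \r$. Adding the two contributions, the terms $\pm \lambda \mathds{E} b(Y)\widetilde{u}(\lambda)$ cancel and leave precisely the right-hand side of \eqref{primonellemma}.

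Finally, \eqref{secondonellemma} follows by subtracting the correction term. From the same averaged identity one has $\mathcal{L}[\mathds{E}\bar{\nu}(t, Y) u(0)](\lambda) = u(0)\l \mathds{E} f(\lambda, Y)/\lambda - \mathds{E} b(Y) \r$; subtracting this from \eqref{primonellemma} cancels the two $\mathds{E} b(Y) u(0)$ contributions and converts the constant boundary term into $\lambda^{-1}\mathds{E} f(\lambda, Y) u(0)$, which is exactly \eqref{secondonellemma}. Apart from the Fubini/Tonelli interchange flagged above, every step is a mechanical application of the Laplace calculus for derivatives and convolutions, so I expect no further obstacle.
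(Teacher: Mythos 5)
Your proposal is correct and takes essentially the same route as the paper's proof: the same splitting into drift and convolution terms, the Laplace rules $\mathcal{L}[u'](\lambda)=\lambda\widetilde{u}(\lambda)-u(0)$ and the convolution theorem, the $p$-averaged form of \eqref{berncode} for $\mathcal{L}[\mathds{E}\bar{\nu}(\puntomio,Y)](\lambda)$, and the same subtraction of $\mathcal{L}[\mathds{E}\bar{\nu}(t,Y)u(0)](\lambda)$ to pass from \eqref{primonellemma} to \eqref{secondonellemma}. The only difference is that you make explicit the Tonelli interchange and the vanishing of the convolution at $t=0$ (justifying the absence of a boundary term), which the paper leaves implicit; note only the trivial slip that $\int_0^1\bar{\nu}(s,y)\,ds = a(y)+V(y)$ rather than $V(y)$, which changes nothing.
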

\begin{proof}
The results follow by explicitly computing the transformation as
\begin{align}
\mathcal{L} \left[ _0\mathcal{D}_t^{f, p} u(t) \right] (\lambda) \, = \, & \mathcal{L} \left[ \mathds{E}  b(Y)\frac{d}{dt}u(t) + \frac{d}{dt}  \int_0^{t}   u( s) \mathds{E}  \bar{\nu}(t-s, Y) ds \right] (\lambda) \notag \\
= \, & \mathds{E} b(Y) \l \lambda  \widetilde{u} (\lambda) - u(0) \r + \lambda \mathcal{L} \left[ u \, * \, \mathds{E} \bar{\nu}  \right] (\lambda) \notag \\
= \, & \mathds{E} b(Y) \l \lambda  \widetilde{u} (\lambda) - u(0) \r + \lambda  \l \widetilde{u}(\lambda) \l \lambda^{-1} \mathds{E}  f(\lambda, Y)- \mathds{E}  b (Y) \r \r \notag \\
= \, & \mathds{E} f(\lambda, Y) \widetilde{u}(\lambda) - \mathds{E} b(Y) u(0)
\label{bastaa}
\end{align}
where we used \eqref{berncode}, and this proves \eqref{primonellemma}. With the symbol $u * v$ we mean the Laplace convolution. The expression \eqref{secondonellemma} follows from \eqref{berncode} and \eqref{bastaa} since
\begin{align}
\mathcal{L} \left[ \mathds{E}  \bar{\nu}(s, Y) u(0) \right] (\lambda) \, = \, \l \mathds{E}   \lambda^{-1} f(\lambda, Y) - \mathds{E} b(Y) \r u(0).
\end{align}
\end{proof}
\begin{os}
\label{remfrac}
Note that if $\mathds{E}b(Y)=0$, and
\begin{align}
\nu(ds, y) \, = \, \frac{\alpha(y) s^{-\alpha(y) -1}}{\Gamma(1-\alpha(y))}ds
\end{align}
for a function $\alpha(y)$ strictly between zero and one and such that A2) is fulfilled, the integro-differential operator of Definition \ref{dev1} becomes
\begin{align}
_c\mathcal{D}_t^{f, p} u(t) \, = \, \frac{d}{dt }\int_c^{t} u(s) \int_W \frac{(t-s)^{-\alpha (y)}}{\Gamma(1-\alpha(y))}  p(dy) ds
\label{riep}
\end{align}
We obtain therefore a form of the distributed order Riemann-Liouville derivative written as
\begin{align}
\int_W \; \frac{^R\partial^{\alpha(y)}}{\partial t^{\alpha(y)}} u(t) \, p(dy) \, = \, \int_W \frac{1}{\Gamma (1-\alpha(y))} \frac{d}{dt} \int_0^t  u(s) \, (t-s)^{-\alpha(y)} \, ds \, p(dy),
\label{mazzao}
\end{align}
for which
\begin{align}
\mathcal{L} \left[ _0\mathcal{D}_t^{f, p} u(t) \right] (\lambda) \, = \, \mathds{E}  \lambda^{\alpha(Y)} \widetilde{u} (\lambda).
\end{align}
In order to obtain a more familiar expression of the distributed order fractional derivative one can set in \eqref{mazzao} $\alpha(y) = \alpha y$, for a constant $0<\alpha < 1$ and $p(dy)$ a probability measure on $(0,1)$.
\end{os}
\begin{os}
The operator
\begin{align}
_0\mathcal{D}^{f, p}_t u(t) - \mathds{E} \bar{\nu}(t, Y) u(0)
\label{to have caputo}
\end{align}
has the form of a regularized Riemann-Liouville derivative with a different kernel (as in \eqref{derivatakochu}) and Lemma \ref{dafare} shows that it may be viewed as a generalized distributed order Dzerbayshan-Caputo derivative.
In the logic of \eqref{to have caputo} we note that if $\mathds{E} b(Y)=0$, $\bar{\nu}(s, y) = s^{-\alpha(y)} / \Gamma(1-\alpha(y))$ we get that
\begin{align}
\mathcal{L} \left[ _0\mathcal{D}_t^{f, p} u(t) - \mathds{E}  \bar{\nu}(s, Y) \right] (\lambda) \, = \, \mathds{E}  \lambda^{\alpha (Y)} \widetilde{u}(\lambda) - \lambda^{-1} \mathds{E} \lambda^{\alpha(Y)} u(0).
\end{align}
\end{os}
The above arguments and Lemma \ref{dafare} inspire the following definition.
\begin{defin} 
\label{defreg}
Let $u$, $f$ and $\bar{\nu}$ be as in Definition \ref{dev1}. The regularized distributed order Riemann-Liouville derivative may be generalized as
\begin{align}
_c\mathsf{D}^{f, p}_t u(t) \, = \, & _c\mathcal{D}_t^{f, p} u(t) - u(c) \mathds{E} \bar{\nu}(t-c, Y) \notag \\
= \, &  \mathds{E}  b(Y) \frac{d}{dt} u(t) + \frac{d}{dt} \int_c^t u(s) \mathds{E}  \bar{\nu}(t-s, Y) ds - u(c) \mathds{E} \bar{\nu}(t-c, Y).
\label{regularized}
\end{align}
If $\mathds{E}b(Y)=0$ formula \eqref{regularized} make sense for a continuous function $u$, as it does for the classical distributed order fractional derivative (see \cite{meerdo} formula (2.6) and (2.7)).
\end{defin}

\subsection{The governing equations}
In what follows we work with subordinators with a density. In view of Theorem 27.7 in \citet{sato} we know that a sufficient condition for saying that a subordinator has a density is that $\nu(0, \infty) = \infty$ and that the function $s \to \bar{\nu} (s) = a+\nu(s, \infty)$ is absolutely continuous on $(0, \infty)$.
In such a framework we have the following.
\begin{te}
\label{teoremagoveq}
Let $\sigma^{f, y}(t)$ be a subordinator with Laplace exponent $f(\lambda, y)$. Suppose that $\nu((0, \infty), y) = \infty$, for all $y \in E$, and that the function $s \to \bar{\nu}(s,y) = a(y) + \nu((s, \infty), y)$ is absolutely continuous on $(0, \infty)$ for all $y \in E$. Let $\sigma^{f, p}$ be the corresponding distributed order subordinator with Laplace exponent $\mathds{E} f(\lambda, Y)$. Let $L^{f, p}(t)$, $t>0$, be the inverse of $\sigma^{f, p}$ with $l_t(B) = \Pr \ll L^f(t) \in B \rr$, $B$ Borel. We have that
\begin{enumerate}
\item  The subordinator $\sigma^{f, p}(t)$ has a density
\label{dens}
\item \label{renform} Denote the density of $\sigma^{f, p}(t)$ of point \eqref{dens} by $\mu_p(x, t)$. We have that $l_t(dx)$ admits the density 
\begin{align}
l_p(x, t) = \mathds{E} b(Y)\mu_p(t, x)+ \int_0^t \mu_p (s, x) \mathds{E} \bar{\nu}(t-s, Y)ds.
\end{align}
\item If $\mathds{E}b(Y)>0$ assume that $x \to \mu_p(x, t)$ is differentiable. We have that $\mu_p(x, t)$ solves
\begin{align}
\frac{\partial}{\partial t} q(x, t) \, = \, - \, _{\mathds{E}b(Y)t}\mathcal{D}_x^{f, p} q(x, t), \qquad x > t\, \mathds{E}b(Y), 0<t<\infty, 
\label{problb0}
\end{align}
subject to $q(x, 0)dx \, = \, \delta_0(dx)$ and $q(t\mathds{E}b(Y), t) =0$.

\item The density $l_p(x, t)$ of $L^{f, p}(t)$ solves
\begin{align}
_0\mathcal{D}^{f,p}_t q(x, t) \, = \, -\frac{\partial}{\partial x} q(x, t), \qquad\, 0 < t < \infty, \, \begin{cases}0<x<\frac{t}{\mathds{E}b(Y)}, \quad & \mathds{E}b(Y)>0,\\ 0<x<\infty, & \mathds{E}b(Y) = 0,  \end{cases}, 
\label{problinv}
\end{align}
subject to $q(0, t) \, = \, \mathds{E} \bar{\nu}(t, Y)$ and $q(x, 0)dx = \delta_0(dx)$. 
\end{enumerate}
\end{te}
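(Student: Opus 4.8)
\emph{Overview and part (1).} Throughout I would work in the Laplace domain, exploiting the explicit transform $\mathds{E}e^{-\lambda\sigma^{f,p}(t)}=e^{-t\mathds{E}f(\lambda,Y)}$ together with the fact that $\sigma^{f,p}$ has mixed triplet $(\mathds{E}a(Y),\mathds{E}b(Y),\mathds{E}\nu(ds,Y))$, and reading off the symbols of the integro-differential operators from Lemma \ref{dafare}. For (1) I apply Theorem 27.7 of \citet{sato} to this mixed triplet. Since $\nu((0,\infty),y)=\infty$ for every $y$, the mixed Lévy measure satisfies $\int_W\nu((0,\infty),y)p(dy)=\infty$; and absolute continuity of $s\mapsto\mathds{E}\bar\nu(s,Y)$ follows by writing each absolutely continuous tail as $\bar\nu(s,y)=a(y)+\int_s^\infty m(r,y)\,dr$ and interchanging the $dr$ and $p(dy)$ integrations by Fubini (legitimate under A2)), which shows $s\mapsto\mathds{E}\bar\nu(s,Y)$ is absolutely continuous with a.e. derivative $-\mathds{E}m(s,Y)$.

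\emph{Parts (2) and (4).} Both rest on a Laplace transform in the time variable $t$ together with the defining relation $\{L^{f,p}(t)>x\}=\{\sigma^{f,p}(x)<t\}$. The latter gives, after a routine computation, $\int_0^\infty e^{-\lambda t}l_p(x,t)\,dt=\lambda^{-1}\mathds{E}f(\lambda,Y)\,e^{-x\mathds{E}f(\lambda,Y)}$. For (2) I would factor $\lambda^{-1}\mathds{E}f(\lambda,Y)=\mathds{E}b(Y)+\int_0^\infty e^{-\lambda s}\mathds{E}\bar\nu(s,Y)\,ds$ via \eqref{berncode} and recognise $e^{-x\mathds{E}f(\lambda,Y)}$ as the Laplace transform, in its first argument, of $\mu_p(\puntomio,x)$; inverting the resulting product yields precisely the stated drift-plus-convolution formula. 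For (4) I would Laplace transform the candidate $l_p$ in $t$: by \eqref{primonellemma}, $\mathcal{L}[\,_0\mathcal{D}_t^{f,p}l_p](\lambda)=\mathds{E}f(\lambda,Y)\,\overline{l_p}(x,\lambda)-\mathds{E}b(Y)l_p(x,0)$, the initial term vanishes for $x>0$, and the remainder equals $-\partial_x\overline{l_p}(x,\lambda)$, which is the Laplace transform of $-\partial_x l_p$. The boundary condition $q(0,t)=\mathds{E}\bar\nu(t,Y)$ comes from evaluating the formula of (2) at $x=0$, where $\mu_p(\puntomio,0)=\delta_0$ collapses the convolution, and the initial condition from $L^{f,p}(0)=0$.

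\emph{Part (3) and the main difficulty.} Here the positive drift forces $\mu_p(x,t)=0$ for $x<\mathds{E}b(Y)t$, which is exactly why \eqref{problb0} carries the moving lower limit $c=\mathds{E}b(Y)t$. I would substitute $y=x-c$ and set $\tilde\mu(y,t)=\mu_p(y+c,t)$; under this translation the drift derivative and the convolution kernel shift cleanly, so that $_c\mathcal{D}_x^{f,p}\mu_p(x,t)={}_0\mathcal{D}_y^{f,p}\tilde\mu(y,t)$ and Lemma \ref{dafare} becomes available with boundary value $\tilde\mu(0,t)=\mu_p(\mathds{E}b(Y)t,t)$. Taking the $x$-Laplace transform of \eqref{problb0} and using $\partial_t e^{-t\mathds{E}f(\lambda,Y)}=-\mathds{E}f(\lambda,Y)e^{-t\mathds{E}f(\lambda,Y)}$ then matches the two sides, provided $\mu_p(\mathds{E}b(Y)t,t)=0$. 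The hard part will be justifying this vanishing boundary term: I expect the density to tend continuously to zero at the left edge of its support precisely because the Lévy measure is infinite, but turning the formal Laplace-domain identity into a genuine solution — controlling the differentiability hypothesis and the shift of the Riemann-Liouville operator with a $t$-dependent base point — is the delicate step, as opposed to the purely computational matching in (2) and (4).
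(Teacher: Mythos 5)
Your proposal is correct and follows essentially the same route as the paper: part (1) via Fubini and Sato's Theorem 27.7 applied to the mixed triplet, and parts (2)--(4) by matching Laplace and Laplace--Laplace transforms through \eqref{berncode} and Lemma \ref{dafare}. The only divergence is that the ``hard part'' you anticipate in (3) does not arise in the paper's argument: there, solutions are understood in the mild sense of \cite{meertri} (the transform must solve the corresponding algebraic equation), and $q(t\,\mathds{E}b(Y),t)=0$ is imposed as a side condition of the problem rather than proved pointwise for $\mu_p$, so the paper's proof stops exactly at what you call the formal Laplace-domain identity.
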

\begin{proof}
\begin{enumerate}
\item From \eqref{47} and since $s \to \bar{\nu}(s, y)$ is absolutely continuous we have that
\begin{align}
& \int_W \bar{\nu} (s, y) p(dy) \, = \, \int_W a(y) p(dy)+ \int_W \int_s^\infty v(w, y)dw \, p(dy) \notag \\
 = \, & \int_W a(y)p(dy) + \int_s^\infty \int_W v(w, y) p(dy) \, dw,
\end{align}
where we denote by $v$ the density of $\nu$ with respect to the Lebesgue measure. Since $v$ is a density we have that $\mathds{E} v(s, Y)$ is a density and therefore $\int_s^\infty \mathds{E}  v(s, Y) ds$ is absolutely continuous. Furthermore if $\nu((0, \infty), y) = \infty$, for all $y \in E$ we clearly have that $\mathds{E} \nu((0, \infty), Y) = \infty$. By applying Theorem 27.7 of \cite{sato} we have proved this point.
\item First note that
\begin{align}
\mathcal{L} \left[ l_p (x, \puntomio) \right] (\lambda) \, = \, & -\frac{\partial}{\partial x} \int_0^\infty e^{-\lambda t} \Pr \ll \sigma^{f, p} (x) \leq t \rr \, dt \notag \\
= \, & \l \lambda^{-1} \int_W f(\lambda, y) p(dy) \r  \exp \ll -x \int_W f(\lambda, y) p(dy) \rr.
\label{laplacetempoinverso}
\end{align}
Now by \eqref{berncode} we write
\begin{align}
& \mathcal{L} \left[\mathds{E}  b(Y)\mu_p(\puntomio, x)+  \int_0^{\puntomio} \mu_p(s, x) \mathds{E} \bar{\nu}(\puntomio -s,Y)ds  \right] (\lambda) \notag \\
 = \, & \mathds{E}  b(Y) e^{-x\mathds{E} f(\lambda, Y)}+  \mathcal{L} \left[ \mu_p( \puntomio, x) * \mathds{E} \bar{\nu}(\puntomio, Y) \right] (\lambda) \notag \\
 = \, & \mathds{E}  b(Y) e^{-x\mathds{E} f(\lambda, Y)} +  e^{-x\mathds{E} f(\lambda, Y)} \l \frac{\mathds{E} f(\lambda, Y)}{\lambda} -\mathds{E}  b(Y) \r  \notag \\
= \, & \l \lambda^{-1} \int_W f(\lambda, y) p(dy) \r  \exp \ll -x \int_W f(\lambda, y) p(dy) \rr
\label{514}
\end{align}
and since \eqref{514} coincides with \eqref{laplacetempoinverso} we have proved this point.
\item Note that we need the differentiability of $x \to \mu_p(x, t)$ only if $\mathds{E} b(Y)>0$, indeed for $\mathds{E} b(Y)=0$ the operator \eqref{defriemvar} does not require the existence of the first derivative and therefore in this case it is well defined. In view of Lemma \ref{dafare} it is easy to show that the $x$-Laplace transform of the analytical solution to \eqref{problb0} solves the problem
\begin{align}
\frac{\partial}{\partial t} \widetilde{q}(\phi, t) \, = \, -\mathds{E}f(\phi, Y) \widetilde{q}(\phi, t)+\mathds{E} b(Y) q(t\mathds{E}b(Y), t).
\label{426}
\end{align}
By considering the boundary condition the last term of \eqref{426} disappears and by taking the Laplace transform with respect to $t$ we get that
\begin{align}
\lambda \widetilde{\widetilde{q}} (\phi, \lambda) -1 \, = \, -\mathds{E}f(\phi, Y) \widetilde{\widetilde{q}}(\phi, \lambda)
\label{426b}
\end{align}
where we have taken into account the initial condition. Formula \eqref{426b} implies
\begin{align}
\widetilde{\widetilde{q}}(\phi, \lambda) \, = \, \mathcal{L} \left[ \mathcal{L} \left[ q(x, t) \right] (\phi) \right] (\lambda) \, = \, \frac{1}{\lambda + \mathds{E} f(\phi, Y)}.
\end{align}
By observing that
\begin{align}
\int_0^\infty e^{-\lambda t} \mathds{E} e^{-\phi \sigma^{f, p}(t)} dt\, = \, \int_0^\infty e^{-t \l \lambda + \mathds{E} f(\phi, Y) \r} dt \, = \,\frac{1}{\lambda + \mathds{E} f(\phi, Y)},
\end{align}
the proof is complete.
\item  In view of point \eqref{renform} we know that
\begin{align}
l_p(x, t) \, = \, \mathds{E} b(Y)\mu_p(t, x) +\int_0^t \mu_p (s, x) \mathds{E} \bar{\nu}(t-s, Y) ds
\end{align}
and therefore the map
\begin{align}
t \to l_p(x, t) 
\end{align}
is differentiable. Note that if $\mathds{E} b(Y)=0$ we don't need to use the differentiability of $t \to \mu_p(t, x)$ since it is a density and the operator $_0\mathcal{D}^{f, p}$ exists for a continuous function, as well as the classical distributed order derivative (\cite{meerdo} page 217).

Now consider the Laplace-Laplace transform of the solution to \eqref{problinv}. In view of Lemma \ref{dafare} and formula \eqref{berncode}, we have
\begin{align}
\begin{cases}
\mathds{E} f(\lambda, Y) \widetilde{q}(x, \lambda  ) + \mathds{E} b(Y) q(x,0) \, = \, -\frac{\partial}{\partial x} \widetilde{q}(x, \lambda), \\
\widetilde{q}(0, \lambda) \, = \, \mathds{E} \frac{f(\lambda, Y)}{\lambda}-\mathds{E} b(Y)
\end{cases}
\end{align}
and thus by taking into account the conditions we get
\begin{align}
\widetilde{\widetilde{q}} (\phi, \lambda) \, = \, \frac{1}{\lambda} \frac{\mathds{E} f(\lambda, Y)}{\phi + \mathds{E} f(\lambda, Y)}.
\end{align}
Now by considering \eqref{514} we have that
\begin{align}
\mathcal{L} \left[ \mathcal{L} \left[ l_p(x, t) \right] (\lambda) \right] (\phi) \, = \, \frac{1}{\lambda} \frac{\mathds{E} f(\lambda, Y)}{\phi + \mathds{E} f(\lambda, Y)},
\end{align}
and this completes the proof.
\end{enumerate}
\end{proof}

\section{An application to slow diffusions}
An important application of distributed order fractional calculus is to model ultraslow diffusions, i.e. diffusions with mean square displacement $\overline{\l\Delta x\r^2} \sim C \log t$, $C>0$ (see, for example, \citet{chechprimo, chechsecondo, kochudo, meerstmod}). Roughly a diffusion is said to be slow if the mean square displacement behaves like $ct^\alpha$, for some $c>0$ and $\alpha < 1$. For subdiffusion see also \citet{magda2}. In \citet{kochudo} the reader can find a rigorous mathematical treatment of the equation 
\begin{align}
\int_0^1 \frac{\partial^\beta}{\partial t^\beta} q(x, t) p(d\beta) \, = \, \Delta q(x, t).
\label{dodiff}
\end{align}
Among other things, the author studied the behaviour of
\begin{align}
\overline{\l \Delta x \r^2} \, = \, \int_{\mathbb{R}^n} |x-u|^2 Z^\beta(x-u, t) du,
\end{align}
where $Z^\beta (x, t)$ is the fundamental solution to \eqref{dodiff}.
In this section we consider the fundamental solution to the equation
\begin{align}
_0\mathsf{D}_t^{f, p} q(x, t) \, = \, \Delta q(x, t), \qquad x \in \mathbb{R}^n, t>0,
\label{diffmia}
\end{align}
where $_0\mathsf{D}_t^{f, p}$ is defined in \eqref{regularized} and we study the mean square displacement. Our approach based on L\'evy mixing permits to carry out the study of \eqref{diffmia} by means of the so-called delayed Brownian motion (consult \citet{magda} for recent developments on sample paths properties of delayed Brownian motion).
We work as in the previous section with L\'evy measures such that $\nu((0, \infty), y) = \infty$ and $s \to \bar{\nu}(s, y)$ is absolutely continuous, for all $y \in E$. In the following theorem we study the behaviour of the mean square displacement related to the fundamental solution to \eqref{diffmia} under the additional assumption that $\mathds{E} f(\lambda, Y)$ is regularly varying at $0+$. A function $f$ is said to be regularly varying at $0+$ if $\lim_{\lambda \to 0} f(\lambda x)/f(\lambda)$ exists finite. In our case $\mathds{E} f(\lambda, Y) \in \mathrm{BF}$ and due to the L\'evy-Kintchine representation the limit is necessarily equal to $x^\alpha$ for $\alpha \in [0,1]$. The reader can consult \citet{bing} for a self-contained text on regular variation.
\begin{te}
Let $L^{f,p}(t)$ be as in Theorem \ref{teoremagoveq}.
Let
\begin{align}
\mathscr{M}(t) \, = \, \int_{\mathbb{R}^n} |x-u|^2 q(x-u, t) du
\end{align}
where $q(x, t)$ is the fundamental solution to \eqref{diffmia}. Let $\alpha$ be such that 
\begin{equation}
\lim_{\lambda \to 0}\mathds{E} f(\lambda x, Y)/\mathds{E} f(\lambda, Y) = x^\alpha
\end{equation}
and $B_n$ be the $n$-dimensional Brownian motion. We have the following results.
\begin{enumerate}
\item \label{1diff} $q(x, t)dx = \Pr \ll B_n \l L^{f, p} (t) \r \in dx \rr$, i.e. $q(x, t)$ can be viewed as a density of the one-dimensional marginal of the delayed Brownian motion (a Brownian motion $B_n$ time-changed with an independent $L^{f,p}(t)$)
\item $ \frac{1}{2n} \Gamma (1+\alpha) \mathscr{M}(t) \sim \frac{1}{\mathds{E} f(1/t, Y)}$ as $t \to \infty$ \label{2diff}
\item If $\mathds{E} a(Y) > 0$ then $\lim_{t \to \infty}\mathscr{M}(t) < \infty$ \label{3diff}
\item In general we have that 
\begin{equation}
\lim_{t \to \infty} \frac{t}{\frac{1}{2n} \Gamma(1+\alpha) \mathscr{M}(t)} \, = \, \mathds{E} b(Y) + \int_0^\infty \int_W  \bar{\nu}(s, y) p(dy) ds \, > 0,
\end{equation}
\label{4diff}
which is infinite or finite (but greater than zero), depending on the kernel $\mathds{E}\bar{\nu}(s, Y)$.
\end{enumerate}
\end{te}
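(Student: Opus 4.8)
The plan is to establish \eqref{1diff} by a Fourier--Laplace identification, and then to reduce all of the asymptotic statements \eqref{2diff}--\eqref{4diff} to the behaviour of the single transform $\mathcal{L}[\mathds{E} L^{f,p}(\puntomio)](\lambda) = (\lambda\,\mathds{E} f(\lambda,Y))^{-1}$ near the origin.

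For \eqref{1diff} I would take the Fourier transform in $x$ (so that $\Delta$ becomes $-|\xi|^2$) and the Laplace transform in $t$ of \eqref{diffmia}. Since $_0\mathsf{D}_t^{f,p}$ has the Laplace symbol recorded in \eqref{secondonellemma} and the initial datum is $q(x,0)dx = \delta_0(dx)$, so that $\widehat{q}(\xi,0)=1$, the transformed equation reads $\mathds{E} f(\lambda,Y)\,\widetilde{\widehat{q}}(\xi,\lambda) - \lambda^{-1}\mathds{E} f(\lambda,Y) = -|\xi|^2\,\widetilde{\widehat{q}}(\xi,\lambda)$, whence $\widetilde{\widehat{q}}(\xi,\lambda) = \lambda^{-1}\mathds{E} f(\lambda,Y)\big/\big(|\xi|^2+\mathds{E} f(\lambda,Y)\big)$. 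On the other hand, conditioning the delayed Brownian motion on the value of $L^{f,p}(t)$ and using $\widehat{\Pr}\{B_n(s)\in\puntomio\}(\xi)=e^{-|\xi|^2 s}$ gives $\mathds{E} e^{-|\xi|^2 L^{f,p}(t)}$, whose $t$-Laplace transform equals $\lambda^{-1}\mathds{E} f(\lambda,Y)\big/(|\xi|^2+\mathds{E} f(\lambda,Y))$ by the double Laplace transform of $l_p$ computed in the proof of Theorem \ref{teoremagoveq} (put $\phi=|\xi|^2$). The two Fourier--Laplace transforms coincide, so by uniqueness $q(x,t)dx = \Pr\{B_n(L^{f,p}(t))\in dx\}$.

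For \eqref{2diff} I would first use \eqref{1diff} to write $\mathscr{M}(t)=\int_{\mathbb{R}^n}|x|^2 q(x,t)\,dx = \mathds{E}|B_n(L^{f,p}(t))|^2$; conditioning on $L^{f,p}(t)$ and using $\mathds{E}|B_n(s)|^2 = 2ns$ (the second moment of the heat kernel with generator $\Delta$) yields the key reduction $\mathscr{M}(t) = 2n\,\mathds{E} L^{f,p}(t)$. Differentiating the double transform from Theorem \ref{teoremagoveq} at $\phi=0$ gives $\mathcal{L}[\mathds{E} L^{f,p}(\puntomio)](\lambda) = (\lambda\,\mathds{E} f(\lambda,Y))^{-1}$. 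Since $\mathds{E} f(\lambda,Y)$ is regularly varying at $0+$ with index $\alpha$, writing $\mathds{E} f(\lambda,Y)=\lambda^\alpha\ell(1/\lambda)$ with $\ell$ slowly varying shows this transform is regularly varying at $0+$ of index $-(1+\alpha)$. As $t\mapsto\mathds{E} L^{f,p}(t)$ is non-decreasing, Karamata's Tauberian theorem (\citet{bing}) applies and gives $\mathds{E} L^{f,p}(t)\sim \Gamma(1+\alpha)^{-1}\big/\mathds{E} f(1/t,Y)$ as $t\to\infty$; multiplying by $2n$ produces \eqref{2diff}.

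Finally, \eqref{3diff} and \eqref{4diff} are limit computations on $\mathds{E} f(\lambda,Y)/\lambda$ as $\lambda=1/t\to0$, via the mixed tail representation \eqref{berncode}, namely $\mathds{E} f(\lambda,Y)/\lambda = \mathds{E} b(Y) + \int_0^\infty e^{-\lambda s}\mathds{E}\bar{\nu}(s,Y)\,ds$. For \eqref{3diff}, when $\mathds{E} a(Y)>0$ one has $\mathds{E} f(\lambda,Y)\to\mathds{E} a(Y)>0$ as $\lambda\to0$, so the final-value theorem for the monotone function $\mathds{E} L^{f,p}$ gives $\lim_{t\to\infty}\mathscr{M}(t)=2n\lim_{\lambda\to0}(\mathds{E} f(\lambda,Y))^{-1}=2n/\mathds{E} a(Y)<\infty$. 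For \eqref{4diff}, \eqref{2diff} turns the ratio into $t\,\mathds{E} f(1/t,Y)=\mathds{E} f(\lambda,Y)/\lambda$ with $\lambda=1/t$, and letting $\lambda\downarrow0$ in \eqref{berncode} by monotone convergence ($e^{-\lambda s}\uparrow1$) yields the stated limit $\mathds{E} b(Y)+\int_0^\infty\int_W\bar{\nu}(s,y)p(dy)\,ds$; this is strictly positive because $\nu((0,\infty),y)=\infty$ forces $\bar{\nu}(s,y)>0$ for small $s$, and it is $+\infty$ precisely when the drift/tail integral diverges (for instance whenever $\mathds{E} a(Y)>0$, in agreement with \eqref{3diff}). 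I expect the only delicate point to be the Tauberian passage in \eqref{2diff}: one must check that regular variation of $\mathds{E} f$ at the origin (guaranteed here, with index $\alpha\in[0,1]$, by its L\'evy--Khintchine form) transfers to regular variation of $(\lambda\,\mathds{E} f)^{-1}$ and that the monotonicity of $\mathds{E} L^{f,p}$ supplies the hypothesis of Karamata's theorem.
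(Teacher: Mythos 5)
Your proposal is correct and follows essentially the same route as the paper: the Fourier--Laplace identification of $q(x,t)$ with the law of $B_n(L^{f,p}(t))$ via the symbol in \eqref{secondonellemma}, the reduction $\mathscr{M}(t)=2n\,\mathds{E} L^{f,p}(t)$ (the renewal function), and the passage $t\,\mathds{E} f(1/t,Y)=\mathds{E} b(Y)+\int_0^\infty e^{-s/t}\,\mathds{E}\bar{\nu}(s,Y)\,ds$ with monotone convergence for parts \eqref{3diff}--\eqref{4diff}. The only cosmetic difference is in part \eqref{2diff}, where you re-derive the asymptotics $\Gamma(1+\alpha)\,U^{f,p}(t)\sim 1/\mathds{E} f(1/t,Y)$ from Karamata's Tauberian theorem, whereas the paper simply cites Proposition 1.5 of \citet{bertoins}, which is exactly that statement.
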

\begin{proof}
First we prove that the fundamental solution to \eqref{diffmia}
admits the representation \eqref{1diff} and thus coincides with the law of the $n$-dimensional time-changed Brownian motion $B_n \l L^{f, p}(t) \r$, $t>0$.
In view of independence the law of $B_n \l L^{f, p}(t) \r$ is
\begin{align}
\Pr \ll B_n \l L^{f, p}(t) \r \in dx \rr  \, = \,& \int_0^\infty \Pr \ll B(s) \in dx  \rr  \, \Pr \ll L^{f, p}(t) \in ds \rr \notag \\
= \, & dx\int_0^\infty \frac{e^{-\frac{|x|^2}{4s}}}{\l 4\pi s\r^{n/2}} l_p(s, t) ds.
\label{casa}
\end{align}
In view of point (1) of Theorem \ref{teoremagoveq} we have that \eqref{casa} becomes
\begin{align}
& \Pr \ll B_n \l L^{f, p}(t) \r \in dx \rr /dx \, \notag\\ = \, & \int_0^\infty \frac{e^{-\frac{|x|^2}{4s}}}{(4\pi s)^{n/2}} \l b\mu_p(t, s)+ \int_0^t \mu_p(w, s) \mathds{E} \bar{\nu}(t-w, Y) dw \r ds
\end{align}
where $\mu_p (w, s)$ is the density of the subordinator $\sigma^{f, p}(s)$ such that
\begin{equation}
 L^{f, p} (t)  \, = \, \inf \ll s \geq 0: \sigma^{f, p}(s) > t \rr.
\end{equation}
By using Lemma \ref{dafare} we can consider the Fourier-Laplace transform of the analytical solution $q$ to \eqref{diffmia} (with $q(x, 0)dx = \delta_0 (dx)$) which reads
\begin{align}
\widehat{\widetilde{q}} (\xi, \lambda) \, = \, \mathds{E} \frac{f(\lambda, Y)}{\lambda} \frac{1}{\mathds{E} f(\lambda, Y) + | \xi |^2}
\end{align}
and by computing the Fourier-Laplace transform of \eqref{casa} we get that
\begin{align}
\mathcal{L} \left[ \mathds{E} e^{i\xi B_n \l L^{f, p}(\puntomio) \r} \right]  (\lambda) \, = \, & \mathcal{L} \left[ \int_0^\infty e^{-s | \xi |^2} l_p (s, \puntomio) ds \right] (\lambda) \notag \\
= \, & \lambda^{-1} \mathds{E} f(\lambda, Y) \frac{1}{\mathds{E} f(\lambda, Y) + | \xi |^2}.
\end{align}
We have proved \eqref{1diff}.
The mean square displacement in this case becomes
\begin{align}
\mathscr{M}(t) \,  = \, \overline{ \l \Delta x \r^2} \, = \,  & \int_{\mathbb{R}^n} |x-u|^2 q(x-u, t) du \notag \\
= \, &2n \int_0^\infty s \, l_p(s, t) ds \notag  \\
= \, &2n \, U^{f,p}(t)
\label{msdu}
\end{align}
where
\begin{align}
U^{f, p}(t) \, = \, \mathds{E} L^{f, p}(t) \, = \, \mathds{E}   \int_0^\infty \mathds{1}_{\ll \sigma^{f, p}(x) < t \rr} dx
\label{611}
\end{align}
is known in literature as the renewal function. The behavior of such a function has been studied under the assumption of regular variation. We recall that  if $f(x)$ is a Bernstein function regularly varying at $0+$ then for $\alpha \in [0,1]$
\begin{align}
\Gamma(1+\alpha) U^f(cx) \sim c^\alpha / f(1/x) \textrm{ as } x \to \infty.
\label{bertoinregvar}
\end{align}
Such a result may be found in \citet{bertoins}, Proposition 1.5. By applying \eqref{bertoinregvar} we write for \eqref{msdu}
\begin{align}
\frac{1}{2n}\Gamma(1+\alpha) \mathscr{M}(t) \, = \,\frac{1}{2n} \Gamma(1+\alpha) U^{f, p}(t)  \sim \frac{1}{\mathds{E} f(1/t, Y)} \textrm{ as } t \to \infty,
\label{attesa}
\end{align}
provided that the Bernstein function $\mathds{E} f(t, Y)$ is regularly varying at $0+$. We have proved \eqref{2diff}. Since for $\mathds{E}a(Y)>0$ we have that $\lim_{t \to \infty} 1/\mathds{E}f(1/t, Y)< \infty$ we have also proved \eqref{3diff}.

We observe that in general from result \eqref{attesa} we can write
\begin{align}
\lim_{t \to \infty} \frac{t}{\frac{1}{2n}\Gamma(1+\alpha)\mathscr{M}(t)} \, = \, & \mathds{E} b(Y)+ \lim_{t \to 0} \int_0^\infty e^{-st} \int_W  \bar{\nu}(s, y) \, p(dy) \, ds 
\label{rappt}
\end{align}
which clearly can not be zero but can be either finite or infinite, depending on the L\'evy measure and on $\mathds{E} a(Y)$. If $\mathds{E} a(Y)>0$ the limit \eqref{rappt} is clearly infinite. From \eqref{rappt} we can write
\begin{align}
\lim_{t \to \infty}\frac{t}{\frac{1}{2n}\Gamma(1+\alpha)\mathscr{M}(t)}\, = \, & \mathds{E} b(Y) + \int_0^\infty \int_W  \bar{\nu}(s, y) p(dy) ds,
\label{rapptfin}
\end{align}
which proves \eqref{4diff} and completes the proof of the Theorem.
\end{proof}
\begin{os}
Actually the limit \eqref{rapptfin} is infinite in most common cases and since it can not be zero we can state that the corresponding diffusions are subdiffusive or at most it can happen that $\frac{1}{2n}\Gamma(1+\alpha)\mathscr{M}(t) \sim Ct$, $C > 0$.
\end{os}
\begin{os}
As pointed out in Remark \ref{remfrac}, a fractional case can be for example $f(t, y) = t^{\beta(y)}$, $0 < \beta (y) < 1$. Suppose that $\beta (y) = \beta y$ with $p(dy)$ uniform in $(0, 1/C)$, $C>1$ and a constant $0<\beta<1 $. According to \eqref{attesa} we have for the mean square displacement
\begin{align}
\frac{1}{2n} \Gamma \l 1+\frac{\beta}{C} \r \mathscr{M}(t) \, \sim \, \frac{1}{\int_0^{1/C} \frac{C}{t^{\beta y }} dy} \, = \, \frac{\frac{\beta }{C} \log t}{1-t^{-\beta/C}}
\end{align}
and this is in accordance with Theorem 4.3 of \citet{kochudo}.
Suppose instead, for example, that
\begin{align}
f(t, y) \, = \, \log(1+t/y) \, = \, \int_0^\infty \l 1-e^{-st} \r \frac{e^{-ys}}{s}ds, \qquad y>0,
\end{align}
which is the Laplace exponent of a gamma subordinator, parametrized by $y$.
If $p(dy) = \delta_y$ one has from \eqref{rapptfin} that
\begin{align}
\lim_{t \to \infty} \frac{t}{\frac{1}{2n} \Gamma(2) \mathscr{M}(t)} \, = \, \int_0^\infty e^{-ys}ds \, = \, \frac{1}{y}.
\end{align}
Suppose instead that $p(dy) = \beta \gamma^\beta y^{-\beta -1} dy$ for $y \geq \gamma >0$, $\beta > 0$, formula \eqref{rapptfin} yields
\begin{align}
\lim_{t \to \infty} \frac{t}{\frac{1}{2n} \Gamma(1+\alpha) \mathscr{M}(t)} \, = \, \frac{1}{\gamma} \frac{\beta}{\beta +1}.
\end{align}
\end{os}

\section*{Acknowledgements}
Thanks are due to the Referees and to the Associate Editor whose remarks and suggestions have considerably improved a previous draft of the paper.

\end{document}